\newtheorem{theorem}{Theorem}
\newtheorem{prop}[theorem]{Proposition}
\newtheorem{lemma}[theorem]{Lemma}
\newtheorem{cor}[theorem]{Corollary}
\newcommand{\vertex}{\node[vertex]}
\tikzstyle{vertex}=[circle, draw, inner sep=0pt, minimum size=6pt]
\newcommand{\smallqed}{{\tiny ($\Box$)}}
\newcommand{\QEDmark}{\mbox{\textsc{qed}}}
\newcommand{\proofStarter}[1]{\textsc{#1} }
\newcommand{\gid}{\gamma^{\rm ID}}
\begin{document}
\title{On minimum identifying codes in some \\ Cartesian product graphs}
\author{Douglas F. Rall\\
Furman University\\
Greenville, SC, USA\\
doug.rall@furman.edu
 \and Kirsti Wash\\
 Trinity College\\
 Hartford, CT, USA\\
 kirsti.wash@trincoll.edu}
\date{\today}

\maketitle
\begin{abstract}
An identifying code in a graph is a dominating set that also has the property that the closed
neighborhood of each vertex in the graph has a distinct intersection with the set.  The
minimum cardinality of an identifying code, or ID code, in a graph $G$ is called the ID code number of $G$ and is denoted $\gid(G)$. In this paper, we give upper and lower bounds for the ID code number of the prism of a graph, or $G\Box K_2$. In particular, we show that $\gid(G \Box K_2) \ge \gid(G)$ and we show that this bound is sharp. We also give upper and lower bounds for the ID code number of grid graphs and a general upper bound for $\gid(G\Box K_2)$.
\end{abstract}
\bigskip
\noindent
{\bf Keywords:} Identifying code, dominating set, Cartesian product, prism, grid graphs\\

\noindent
{\bf AMS subject classification (2010)}: 05C69, 05C76

\section{Introduction}\label{sec:intro}
An identifying code, or ID code, in a graph is a dominating set that also has the property
that the closed neighborhood of each vertex in the graph has a distinct intersection with the
set. Thus every vertex of the graph can be uniquely located by using this intersection. Analogous to the domination number, 
the ID code number of a graph $G$ is the minimum cardinality of an ID code of $G$ and is denoted $\gid(G)$. ID codes were first 
introduced in 1998 by Karpovsky, Chakrabarty and Levitin \cite{IDCintro} who used them to analyze fault-detection problems in 
multi-processor systems. Since 1998 ID codes have been studied in many classes of graphs and an excellent, detailed list of 
references on ID codes can be found on Antoine Lobstein's webpage \cite{Lobweb}. 

We shall focus on ID codes in a specific graph product, the Cartesian product.
The {\it Cartesian product} of graphs $G$ and $H$, denoted $G \Box H$, is the graph whose vertex set is
$V(G) \times V(H)$.  Two vertices $(u_1, u_2)$ and $(v_1, v_2)$ in $G \Box H$ are adjacent if either $u_1v_1 \in E(G)$ and $u_2=v_2$,
or $u_1=v_1$ and $u_2v_2 \in E(H)$. When $H = K_2$, we refer to $G \Box K_2$ as the {\it prism of} $G$.
Cartesian products have been studied for some time, and extensive information on their
structural properties can be found in \cite{imklra-08} and \cite{hik-2011}.

With respect to graph products, ID codes have been studied in the direct product of cliques~\cite{rw2012}, 
hypercubes~\cite{bhl2000, hl2002,idcodehyper,kcla1999,m2006}, and infinite grids~\cite{bl2005,chmglpz1999,idcodegrids}. As we will 
be focusing on Cartesian products, some of the more recent results regarding ID codes have been in the study of the Cartesian product 
of cliques~\cite{gms2006, GW2013}, and the Cartesian product of a path and a clique~\cite{J.Hedet}. In light of these results,
 we first focus on the prism of a graph. When studying any parameter in a Cartesian product, an important question is whether
 there exists some formula relating the value of the parameter in the product to the value of the parameter in the underlying factor graphs.
In~\cite{hr2004} the authors prove the following result that relates the domination number of the prism of a graph $G$
to the domination number of $G$.

\begin{theorem}[\cite{hr2004}] \label{gamma}
If $G$ is any graph, then $\gamma(G) \le \gamma(G \Box K_2) \le 2\gamma(G)$.
\end{theorem}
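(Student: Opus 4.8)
The plan is to establish the two inequalities separately, each by an elementary argument that passes between a dominating set of $G$ and one of $G \Box K_2$ via the natural projection onto the $G$-coordinate. Throughout, I would write the two copies of $G$ inside the prism as $G_1$ and $G_2$, where $G_i$ is the subgraph induced by $V(G) \times \{i\}$, and for a set $S \subseteq V(G \Box K_2)$ I would let $D_i = \{v \in V(G) : (v,i) \in S\}$ denote its projection from layer $i$.

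For the upper bound $\gamma(G \Box K_2) \le 2\gamma(G)$, I would take a minimum dominating set $D$ of $G$ and show that $D \times \{1,2\}$ dominates the prism. Indeed, given any vertex $(v,i)$, if $v \in D$ then $(v,i)$ already lies in the set, and otherwise $v$ has a neighbor $u \in D$ in $G$, so $(u,i)$ is a neighbor of $(v,i)$ lying in $D \times \{1,2\}$. Hence $D \times \{1,2\}$ is a dominating set of $G \Box K_2$ of size $2|D| = 2\gamma(G)$, which gives the claimed bound.

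For the lower bound $\gamma(G) \le \gamma(G \Box K_2)$, I would start from a minimum dominating set $S$ of $G \Box K_2$ and argue that $D_1 \cup D_2$ dominates $G$. Fix $v \in V(G)$ and consider the vertex $(v,1)$: since $S$ is dominating, $S$ meets the closed neighborhood of $(v,1)$ in the prism, which is exactly $\{(v,1)\} \cup \{(v,2)\} \cup \{(u,1) : u \in N_G(v)\}$, so either $(v,1) \in S$, or $(v,2) \in S$, or $(u,1) \in S$ for some $u \in N_G(v)$. In each case $v$ is dominated in $G$ by a vertex of $D_1 \cup D_2$. Finally, $|D_1 \cup D_2| \le |D_1| + |D_2| = |S|$, so $\gamma(G) \le |S| = \gamma(G \Box K_2)$.

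Neither step presents a genuine obstacle; the only point that warrants a little care is the last inequality, where one must resist identifying $|D_1 \cup D_2|$ with $|S|$ — the union can be strictly smaller, but the crude bound $|D_1 \cup D_2| \le |D_1| + |D_2|$ is all that is needed. It is also worth double-checking that the three cases for $(v,1)$ really exhaust its closed neighborhood in $G \Box K_2$, since that is where the structure of the Cartesian product enters.
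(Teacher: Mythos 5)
Your proof is correct, and it follows exactly the argument the paper relies on: this theorem is only quoted from~\cite{hr2004}, and the standard proof there is precisely your combination of doubling a minimum dominating set of $G$ for the upper bound and projecting a dominating set of $G \Box K_2$ onto $G$ for the lower bound (the same projection argument the authors later reference in Section~4). Your attention to the crude bound $|D_1 \cup D_2| \le |D_1| + |D_2| = |S|$ and to the exact form of the closed neighborhood of $(v,1)$ is exactly where the care is needed, and both points are handled correctly.
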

\noindent  Since identifying codes are in the first place dominating sets, it seems natural to suspect that if $G$ has an identifying code then a similar
relationship would hold between $\gid(G)$ and $\gid(G \Box K_2)$.   Namely, it would be natural to suspect
that $\gid(G) \le \gid(G \Box K_2) \le 2\gid(G)$.  Indeed, we will prove that the lower bound in this inequality is correct
and will show that the upper bound need not be true unless we make some additional assumptions on the minimum ID codes of $G$.
It is known that for any graph $G$ of order $n$, $\gid(G) \le n-1$. In \cite{extremalID} Foucaud et al. identify the class of all graphs which 
attain this bound, and interestingly enough, a subset of this class achieves the lower bound $\gid(G \Box K_2) = \gid(G)$.  We also demonstrate 
an infinite family of graphs
with identifying codes that show the  upper bound is sharp.

 Finally, we concentrate on the ID code number of grid graphs, i.e. the Cartesian product of two paths. The problem of finding the exact 
 value for the domination number of grid graphs was quite difficult and finally settled in \cite{grids}. We expect finding the exact value 
 for the ID code number of grid graphs to be just as difficult. In this paper, we give both upper and lower bounds for the ID code number 
 of grid graphs, and we also give a general upper bound for the ID code number of the Cartesian product of a graph $G$ and a path.

The remainder of the paper is organized as follows.  In Section~\ref{sec:defns} we give some useful
definitions and terminology as well as prove some basic facts about minimum ID codes.  In Section~\ref{upper} we prove the natural upper bound
 for the ID code number of the prism of a graph $G$ when an additional assumption is imposed on $G$ and show this bound is sharp.  Section~\ref{lower} 
 is devoted to giving a lower bound for $\gid(G\Box K_2)$ for any {graph $G$. We also prove that the bound is sharp in this section.  In 
 Section~\ref{grids}, we give upper and lower bounds for $\gid(P_m \Box P_n)$ for any positive integers $2 \le m \le n$ and we give a general 
 upper bound for $\gid(G \Box P_m)$.

\section{Definitions and Preliminary Results} \label{sec:defns}

Given a simple undirected graph $G$ and a vertex $x$ of $G$, we let $N(x)$ denote the
\textit{open neighborhood} of $x$, that is, the set of vertices adjacent to $x$.
The \textit{closed neighborhood} of $x$ is  $N[x]=N(x) \cup \{x\}$.  By a \textit{code}
in $G$ we mean any nonempty subset of vertices in $G$.  The vertices in a code are
called \textit{codewords}.  A code $D$ in $G$
is a \textit{dominating set} of $G$ if $D$ has a nonempty intersection with the closed
neighborhood of every vertex of $G$.  The \textit{domination number} of $G$ is the
cardinality of a smallest dominating set of $G$; it is denoted by $\gamma(G)$. A code having
the property that the distance between any two codewords is at least 3 is called a \textit{2-packing}
of $G$, and $\rho_2(G)$ is the smallest cardinality of a 2-packing in $G$.
 For compact writing we denote $N[x] \cap D$ by $I_D(x)$.  A code $D$  \textit{separates}
two distinct vertices $x$ and $y$ if $I_D(x) \ne I_D(y)$. When $D=\{u\}$ we
say that $u$ separates $x$ and $y$.  As mentioned above, an \textit{identifying code}
(\textit{ID code} for short) of $G$ is a code $C$ that is a dominating set of
$G$ with the additional property that $C$ separates every pair of distinct vertices of $G$.
The minimum cardinality of an ID code of $G$ is denoted $\gid(G)$.  Note that any
graph having two vertices with the same closed neighborhood (so-called \textit{twins})
does not have an ID code.  If a graph has no twins, then we say it is \textit{twin-free}.

If $h\in V(H)$, then the subgraph of $G \Box H$ induced by $V(G)\times \{h\}$ is called a \textit{$G$-fiber}
and is denoted by $G^h$.  In the special case of the prism of $G$ we will assume that $\{1,2\}$ is the
vertex set of $K_2$, and these two $G$-fibers are then $G^1$ and $G^2$.  When dealing with the prism we will simplify
the notation and denote the vertex $(g,i)$ by $g^i$ for $i\in [2]$.  Here $[n]$ denotes the set of positive integers
less than or equal to $n$.  The map $p_G: V(G \Box H) \to V(G)$ defined  by $p_G(a,b)=a$ is the \textit{projection onto $G$}.

While our main emphasis is on minimum ID codes in prisms of graphs, we will also need some basic facts about
ID codes in more general Cartesian products.  The proof of the following is straightforward and is omitted.
\begin{prop} If $G$ and $H$ both have minimum degree at least 1,  then $G \Box H$ is twin-free.
\end{prop}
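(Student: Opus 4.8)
The plan is to argue by contradiction, so suppose $G \Box H$ has twins: distinct vertices $x = (u_1, u_2)$ and $y = (v_1, v_2)$ with $N[x] = N[y]$. First I would note that twins are necessarily adjacent: since $y \in N[y] = N[x]$ and $y \ne x$, we get $xy \in E(G \Box H)$. By the definition of the Cartesian product, $x$ and $y$ then agree in exactly one coordinate, so there are two cases; by exchanging the roles of $G$ and $H$ if necessary, I may assume $u_2 = v_2$ and $u_1 v_1 \in E(G)$ with $u_1 \ne v_1$.

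The key step invokes the hypothesis on the other factor. Since $H$ has minimum degree at least $1$, the vertex $u_2$ has a neighbor $z$ in $H$, and in particular $z \ne u_2$. Consider $w = (u_1, z)$. Then $w \in N[x]$, because $w$ and $x=(u_1,u_2)$ share their first coordinate and $u_2 z \in E(H)$. I would then check that $w \notin N[y]$: we have $w \ne y$ since the first coordinates $u_1$ and $v_1$ differ, and $w$ is not adjacent to $y = (v_1, u_2)$ because the first coordinates $u_1, v_1$ are distinct and the second coordinates $z, u_2$ are also distinct, so neither adjacency condition of the Cartesian product can hold. This yields $w \in N[x] \setminus N[y]$, contradicting $N[x] = N[y]$.

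There is no substantive obstacle here; the proof is short. The only point deserving a word of care is the case analysis: when the twins differ in the $G$-coordinate the argument uses $\delta(H) \ge 1$, and the symmetric case, where the twins differ in the $H$-coordinate, uses $\delta(G) \ge 1$. Since distinct adjacent vertices of $G \Box H$ differ in exactly one coordinate, these two cases are exhaustive, both hypotheses are genuinely used, and the proof is complete.
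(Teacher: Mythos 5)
Your argument is correct: twins (in the closed-neighborhood sense used in this paper) must be adjacent, adjacent vertices of $G \Box H$ differ in exactly one coordinate, and the minimum-degree hypothesis on the other factor produces a vertex $w=(u_1,z)$ lying in $N[x]\setminus N[y]$, which is exactly the kind of routine verification the paper has in mind when it omits the proof as straightforward. No gaps; the case analysis and the use of both hypotheses $\delta(G)\ge 1$ and $\delta(H)\ge 1$ are handled properly.
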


If $C$ is any ID code in a twin-free graph $G$ of order $n$, then $\left\{I_C(x)\right\}_{x \in V(G)}$
is a collection of $n$, pairwise distinct, nonempty subsets of $C$. This fact immediately implies the following result, which was first given in~\cite{IDCintro}.

\begin{prop}[\cite{IDCintro}] \label{prop:upper} Let $G$ be any twin-free graph of order $n$. If $\gid(G) = k$, then $n \le 2^k - 1$.
Equivalently, $\gid(G) \ge \left\lceil \log_2(n+1)\right\rceil$.
\end{prop}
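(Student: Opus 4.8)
The plan is to turn the remark preceding the statement into a pigeonhole bound. First I would fix a minimum identifying code $C$ of $G$, so that $|C|=k$. Each of the three defining properties of an ID code then contributes one ingredient: since $C\subseteq V(G)$, every set $I_C(x)=N[x]\cap C$ is a subset of $C$; since $C$ is a dominating set of $G$, every such $I_C(x)$ is nonempty; and since $C$ separates every pair of distinct vertices, the map $x\mapsto I_C(x)$ is injective on $V(G)$. Thus $x\mapsto I_C(x)$ is an injection from $V(G)$ into the family of nonempty subsets of $C$.

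Next I would just count. An injection from $V(G)$ into the family of nonempty subsets of the $k$-element set $C$ can exist only if $n=|V(G)|$ does not exceed the size of that family, which is $2^k-1$. Hence $n\le 2^k-1$.

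Finally, for the equivalent reformulation I would rearrange the inequality: $n\le 2^k-1$ is the same as $n+1\le 2^k$, i.e.\ $\log_2(n+1)\le k$, and since $k=\gid(G)$ is an integer this is in turn equivalent to $\lceil \log_2(n+1)\rceil \le \gid(G)$.

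There is essentially no obstacle here; the statement is a counting argument. The only point requiring a little care is to record explicitly that all three conditions in the definition of an ID code are used — domination to guarantee nonemptiness of the $I_C(x)$, separation to guarantee injectivity, and $C\subseteq V(G)$ to guarantee that the images are subsets of $C$ — so that the target of the injection is precisely the set of $2^k-1$ nonempty subsets of $C$.
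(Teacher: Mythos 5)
Your proof is correct and is essentially the argument the paper itself gives: the remark preceding the proposition observes that $\{I_C(x)\}_{x\in V(G)}$ is a family of $n$ pairwise distinct, nonempty subsets of $C$, and the bound $n\le 2^k-1$ (hence $\gid(G)\ge\lceil\log_2(n+1)\rceil$) follows by the same counting you perform.
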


In particular, an easy application of Proposition~\ref{prop:upper} to prisms yields the following corollary.

\begin{cor} \label{cor:upper} If $H$ is any graph of order $m$ with no isolated vertices, then
\[\gid(H \Box K_2) \ge \left\lceil \log_2(2m+1)\right\rceil.\]
\end{cor}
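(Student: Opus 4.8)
The plan is to obtain this as an immediate application of Proposition~\ref{prop:upper} to the graph $H \Box K_2$. To do so I need two observations: that $H \Box K_2$ is twin-free (so that it has an ID code at all and the proposition applies), and that it has order $2m$.

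First I would note that since $H$ has no isolated vertices, every vertex of $H$ has degree at least $1$, i.e.\ $H$ has minimum degree at least $1$; the complete graph $K_2$ obviously has minimum degree $1$ as well. Hence, by the proposition established just above (if $G$ and $H$ both have minimum degree at least $1$, then $G \Box H$ is twin-free), the prism $H \Box K_2$ is twin-free. In particular its entire vertex set is an ID code, so $\gid(H \Box K_2)$ is a well-defined finite quantity.

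Next I would compute the order of the prism: $|V(H \Box K_2)| = |V(H)| \cdot |V(K_2)| = 2m$. Substituting $n = 2m$ into the inequality $\gid(G) \ge \lceil \log_2(n+1)\rceil$ from Proposition~\ref{prop:upper} yields $\gid(H \Box K_2) \ge \lceil \log_2(2m+1)\rceil$, which is exactly the claimed bound.

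The argument is short enough that there is no substantial obstacle; the only step requiring a moment of care is verifying twin-freeness of $H \Box K_2$, which is precisely where the hypothesis that $H$ has no isolated vertices is used (and is also what guarantees that $\gid(H \Box K_2)$ is even defined). Everything else is a one-line substitution into Proposition~\ref{prop:upper}.
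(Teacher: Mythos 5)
Your argument is correct and is exactly the route the paper takes: apply the twin-freeness proposition (both factors have minimum degree at least $1$) to see that $H \Box K_2$ admits an ID code, note its order is $2m$, and substitute into Proposition~\ref{prop:upper}. Nothing further is needed.
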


It also follows directly from Corollary~\ref{cor:upper} that if the prism of a graph $G$ has ID code number $3$,
then $G$ has order at most $3$. Thus, we have the following result.

\begin{cor} If $G$ is a twin-free graph with no isolated vertices such that $\gid(G)=3$, then $\gid(G \Box K_2) >3$.
\end{cor}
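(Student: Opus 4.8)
The plan is to turn the two preliminary results into a short contradiction argument. Write $m = |V(G)|$ for the order of $G$. Proposition~\ref{prop:upper} applied to $G$ gives only $m \le 2^3 - 1 = 7$, which is an upper bound on $m$; what I actually need is a \emph{lower} bound on $m$, after which Corollary~\ref{cor:upper} will finish the job.

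First I would establish that $m \ge 4$. Since $G$ has no isolated vertices we have $m \ge 2$, and the graphs on two or three vertices with no isolated vertices are easy to list: on two vertices the only such graph is $K_2$, whose two vertices are twins; on three vertices every disconnected graph has an isolated vertex, and the only connected options are $K_3$ (whose vertices are twins) and $P_3$. Hence the only twin-free graph with no isolated vertices and at most three vertices is $P_3$. But the set consisting of the two endpoints of $P_3$ is an identifying code (the endpoints get singleton intersections and the centre gets the whole code), so $\gid(P_3) = 2$. Since $\gid(G) = 3$, this rules out $m \le 3$, and therefore $m \ge 4$.

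Then I would apply Corollary~\ref{cor:upper} directly: with $m \ge 4$ we get
\[
\gid(G \Box K_2) \ge \left\lceil \log_2(2m+1)\right\rceil \ge \left\lceil \log_2 9\right\rceil = 4 > 3,
\]
which is exactly the assertion.

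There is no real obstacle here; the only place to be careful is to remember that $\gid(G) = 3$ yields $m \ge 4$ (via the classification of tiny twin-free graphs) rather than only the useless bound $m \le 7$. If one prefers to avoid even that short case check, an equivalent route is a contradiction argument: assuming $\gid(G \Box K_2) \le 3$, Corollary~\ref{cor:upper} with $m \ge 2$ forces $\gid(G \Box K_2) = 3$, hence $m \le 3$ by the remark preceding the statement; but then $G \cong P_3$ and $\gid(G) = 2 \ne 3$, a contradiction.
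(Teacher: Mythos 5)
Your proof is correct and takes essentially the same route as the paper: both arguments rest on Corollary~\ref{cor:upper}, using that $\gid(G)=3$ forces $|V(G)|\ge 4$, whence $\gid(G \Box K_2) \ge \lceil \log_2 9 \rceil = 4 > 3$. The only cosmetic difference is that you verify $|V(G)|\ge 4$ by classifying the twin-free graphs without isolated vertices on at most three vertices, while the paper invokes the known bound $\gid(G) \le |V(G)|-1$; both justifications are valid.
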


By more closely analyzing how an identifying code separates vertices in a prism we can deduce some restrictions on ID codes in prisms.

\begin{lemma}\label{lem:upperorder} Let $G$ be a nontrivial, connected graph of order $n$. If $C$ is an identifying code of
$G \Box K_2$ that has $m_i$ codewords in the $G$-layer $G^i$, for $i \in [2]$, then
\[n \le \min\{2^{m_1} - 1 + m_2, 2^{m_2} - 1 + m_1\}.\]
\end{lemma}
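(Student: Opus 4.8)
The plan is to prove the single inequality $n \le 2^{m_1} - 1 + m_2$; the other one follows by interchanging the roles of the two $G$-fibers. The idea is to examine how $C$ identifies the $n$ vertices of $G^1$, but to keep track, for each such vertex, only of the portion of its identifying set lying inside $G^1$. For $g \in V(G)$ write $I_C(g^1) = S_g \cup T_g$ with $S_g = I_C(g^1) \cap V(G^1)$ and $T_g = I_C(g^1) \cap V(G^2)$. Since in $G \Box K_2$ the only neighbor of $g^1$ outside $G^1$ is $g^2$, the set $T_g$ is either $\emptyset$ or $\{g^2\}$, while $S_g$ is a subset of the $m_1$-element set $C \cap V(G^1)$.

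Next I would group the vertices of $G$ by the value of $S_g$: for $S \subseteq C \cap V(G^1)$ put $A_S = \{g \in V(G) : S_g = S\}$, so that the $2^{m_1}$ sets $A_S$ partition $V(G)$ and $\sum_S |A_S| = n$. The first observation is that each class contains at most one vertex $g$ with $g^2 \notin C$: if $g$ and $h$ both lay in $A_S$ with $g^2, h^2 \notin C$, then $T_g = T_h = \emptyset$, so $I_C(g^1) = S = I_C(h^1)$, contradicting that the identifying code $C$ separates the distinct vertices $g^1$ and $h^1$. Setting $B = \{g \in V(G) : g^2 \in C\}$, which has $|B| = m_2$, this yields $|A_S| \le 1 + |A_S \cap B|$ for every $S$.

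Summing the last inequality over all $S$ would only give $n \le 2^{m_1} + m_2$, so the crux — the step I expect to need the most care — is to shave off the extra $1$. This comes from the class $A_\emptyset$: if $S_g = \emptyset$ then $I_C(g^1) = T_g$, and because $C$ is a dominating set $I_C(g^1) \ne \emptyset$, forcing $g^2 \in C$. Hence $A_\emptyset \subseteq B$, so the estimate for this class improves to $|A_\emptyset| = |A_\emptyset \cap B|$ with no additive $1$. Splitting the sum accordingly,
\[ n = |A_\emptyset| + \sum_{S \ne \emptyset} |A_S| \le |A_\emptyset \cap B| + \sum_{S \ne \emptyset}\bigl(1 + |A_S \cap B|\bigr) = 2^{m_1} - 1 + \sum_{S} |A_S \cap B|, \]
and since the classes $A_S$ partition $V(G)$ we have $\sum_S |A_S \cap B| = |B| = m_2$. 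This proves $n \le 2^{m_1} - 1 + m_2$, and the symmetric argument with $G^1$ and $G^2$ swapped gives $n \le 2^{m_2} - 1 + m_1$; combining the two yields the claimed bound.
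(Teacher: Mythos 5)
Your proof is correct and is essentially the paper's argument: both rest on the observation that a vertex $g^1$ with $g^2\notin C$ has $I_C(g^1)$ a nonempty subset of $C\cap V(G^1)$, that distinct such vertices get distinct subsets (so there are at most $2^{m_1}-1$ of them), while the remaining vertices number exactly $m_2$. Your grouping into the classes $A_S$ is just a more explicit bookkeeping of the same partition of $V(G^1)$ used in the paper, so nothing substantively different is gained or lost.
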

\begin{proof}
Let $C$ be any ID code of $G \Box K_2$ and for $i \in [2]$ and let $m_i= |C_i|$ where $C_i = C \cap V(G^i)$.
Note that $\{a^1 : a^2 \not\in C_2\}$, $\{a^1 : a^2 \in C_2\}$ is a partition of $V(G^1)$. Any two vertices in
the former subset are separated by $C_1$, and it follows that $|\{a^1 : a^2 \not\in C_2\}| \le 2^{m_1} - 1$. Clearly
the second of these parts of the partition has cardinality $m_2$. Combining these we get that $n = |V(G^1)| \le 2^{m_1} - 1 + m_2$.
The result follows by applying a similar argument to $G^2$.
\end{proof}

 \vskip5mm
\begin{prop}\label{thm:dom} If the graph $G$ has no isolated vertices, then $\gid(G\Box K_2) > \gamma(G)$.
\end{prop}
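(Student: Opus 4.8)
The plan is to show that a minimum ID code of $G\Box K_2$ projects down to a dominating set of $G$ that is strictly smaller than the code itself, which forces $\gid(G\Box K_2)\ge\gamma(G)+1$. Let $C$ be an ID code of $G\Box K_2$ with $|C|=\gid(G\Box K_2)$, and let $D=p_G(C)=\{g\in V(G): g^1\in C \text{ or } g^2\in C\}$. The first claim is that $D$ is a dominating set of $G$: given any $g\in V(G)$, the vertex $g^1$ is dominated in $G\Box K_2$ by some $c\in C$, and $c$ is either $g^1$ itself, $g^2$, or $h^1$ for some neighbor $h$ of $g$ in $G$; in every case $p_G(c)$ lies in $N[g]$, so $D$ meets $N[g]$. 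Hence $\gamma(G)\le|D|\le|C|$, giving the non-strict bound immediately. (Since $G$ has no isolated vertices, $G\Box K_2$ is twin-free, so the code exists.)

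The substance of the argument is upgrading $|D|\le|C|$ to $|D|<|C|$, equivalently finding a vertex $g\in V(G)$ with \emph{both} $g^1\in C$ and $g^2\in C$; then the two codewords $g^1,g^2$ collapse to a single element of $D$, so $|D|\le|C|-1\le\gid(G\Box K_2)-1$, and combining with $\gamma(G)\le|D|$ yields the strict inequality. So suppose for contradiction that no such $g$ exists, i.e.\ $C_1=C\cap V(G^1)$ and $C_2=C\cap V(G^2)$ have disjoint projections; equivalently, for every $a^1\in C_1$ we have $a^2\notin C_2$, and symmetrically.

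The hard part is deriving the contradiction from this disjointness hypothesis, and I expect to do it by locating a vertex whose identifying set is empty or coincides with another's. Here is the mechanism I would pursue. Pick any codeword $a^1\in C_1$. By the disjointness assumption $a^2\notin C_2$, so $a^2$ is dominated only through its $G$-fiber or through $a^1$; in particular $I_C(a^2)$ consists of $a^1$ together with whatever codewords of $C_2$ lie in $N_G(a)^2$. Now compare $a^1$ and $a^2$: we have $a^1\in I_C(a^1)$ and $a^1\in I_C(a^2)$, so to separate this pair there must be a codeword in $(I_C(a^1)\triangle I_C(a^2))$, which after cancelling $a^1$ means some neighbor of $a$ is a codeword on exactly one of the two fibers. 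Iterating this kind of local analysis — tracking, for a codeword $a^1$, the neighbors of $a$ in $G^1$ and in $G^2$ that are forced to be codewords, and using connectivity of $G$ to propagate constraints around the graph — should produce either a vertex dominated only by itself whose $\triangle$-comparison with an undominated-elsewhere neighbor fails, or two vertices in a common fiber with equal identifying sets. Connectivity and the absence of isolated vertices are exactly what make the propagation terminate in a contradiction; the no-isolated-vertices hypothesis guarantees every fiber vertex has a fiber-neighbor to appeal to, and connectivity prevents the constraints from being satisfiable in a disconnected, modular fashion. Once the contradiction is in hand, the disjointness hypothesis fails, the shared-projection vertex $g$ exists, and the counting argument above closes the proof.
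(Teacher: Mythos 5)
Your projection step is fine as far as it goes: $p_G(C)$ dominates $G$, so $\gamma(G)\le |p_G(C)|\le |C|$. But the heart of your plan---deriving a contradiction from the assumption that no vertex $g$ has both $g^1$ and $g^2$ in $C$---cannot succeed, because that assumption is not contradictory. Minimum ID codes of prisms whose two fiber parts have disjoint projections genuinely exist; the paper itself constructs several. For example, for $G=A_2\bowtie K_1$ (order $5$, $\gid(G\Box K_2)=4$), the code $\{x_1^1,x_3^1,x_2^2,x_4^2\}$ used in Theorem~\ref{thm:gid4} is a minimum ID code with no vertex repeated in both fibers, and the same is true of the codes built for $A_k\Box K_2$ in Theorem~\ref{thm:lowerclass}. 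In such examples the proposition still holds, but only because $|p_G(C)|$ is itself much larger than $\gamma(G)$---not because the projection collapses. So the statement you propose to refute by ``local propagation'' is simply satisfiable, and no amount of iteration can produce the contradiction; indeed that part of your argument is only sketched (``should produce either \dots''), and it also appeals to connectivity of $G$, which is not among the hypotheses. The structural defect is that you never use the numerical hypothesis that makes a contradiction available, namely $|C|\le\gamma(G)$: you are attempting an unconditional claim about all minimum ID codes of $G\Box K_2$, and that claim is false.

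The paper's route shows what is needed. Suppose $|C|\le\gamma(G)$. Since $C$ dominates $G\Box K_2$ and $\gamma(G\Box K_2)\ge\gamma(G)$ (Theorem~\ref{gamma}), this forces $|C|=\gamma(G)=\gamma(G\Box K_2)$, and the minimum dominating sets of a prism achieving $\gamma(G\Box K_2)=\gamma(G)$ have a rigid structure established in~\cite{hr2004}: $C=(D_1\times\{1\})\cup(D_2\times\{2\})$ where $D_1\cup D_2$ is a minimum dominating set of $G$ with $V(G)-N[D_1]=D_2$ and $V(G)-N[D_2]=D_1$. Then any $x\notin D_1\cup D_2$ has exactly one neighbor $d\in D_1$, and one checks $I_C(x,1)=\{(d,1)\}=I_C(d,2)$, contradicting that $C$ separates vertices. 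This use of the extremal structure of dominating sets in the case $|C|=\gamma(G)$ (or a reproof of that structure) is the missing idea; without it, the upgrade from $\ge$ to $>$ does not follow from your projection argument.
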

\begin{proof}

Suppose to the contrary that $G\Box K_2$ has a minimum ID code $C$ such that $|C| \le \gamma(G)$.
Since $C$ dominates $G\Box K_2$, it follows from~\cite{hr2004} that $\gamma(G)\ge |C|\ge \gamma(G\Box K_2) \ge \gamma(G)$,
and hence $|C| = \gamma(G)$.  As shown in~\cite{hr2004}, it follows that
$C = (D_1 \times \{1\}) \cup (D_2 \times \{2\})$ where $D = D_1 \cup D_2$ and $D$ is a minimum dominating set of $G$
such that $V(G)-N[D_1]=D_2$ and $V(G)-N[D_2]=D_1$.  Let $X = V(G) - D$. Every vertex of $X$ has exactly one neighbor in $D_1$ and
exactly one neighbor in $D_2$. Let $x \in X$ and suppose $\{d\} = N(x) \cap D_1$. It now follows that
$I_C(d,2)= \{(d,1)\} = I_C(x,1)$, which contradicts the assumption that $C$ is an ID code for $G \Box K_2$.
\end{proof}

\section{Upper Bound}\label{upper}

In this section we prove that under a certain condition on the minimum ID codes of a graph the natural
upper bound holds for the ID code number of its prism.

\begin{theorem}\label{thm:twicegamma} If $G$ has a minimum ID code $I$ such that $G[I]$ has no isolated vertices, then $\gid(G\Box K_2) \le 2\gid(G)$.
\end{theorem}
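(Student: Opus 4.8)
The plan is to take a minimum ID code $I$ of $G$ with the property that $G[I]$ has no isolated vertices, and to build an ID code of $G \Box K_2$ of size $2|I| = 2\gid(G)$ by placing a copy of $I$ in each fiber. That is, I would set $C = (I \times \{1\}) \cup (I \times \{2\})$ and verify that $C$ is an identifying code of $G \Box K_2$. Since $I$ dominates $G$, both $I \times \{1\}$ and $I \times \{2\}$ dominate their respective fibers, so $C$ is certainly a dominating set; the work is entirely in checking that $C$ separates every pair of distinct vertices.

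The separation check naturally splits into cases according to which fibers the two vertices lie in. First I would handle two vertices $u^i$ and $v^i$ in the \emph{same} fiber $G^i$ with $u \neq v$: here the codeword set inside that fiber is $I \times \{i\}$, which is a copy of the ID code $I$, and since $I$ separates $u$ from $v$ in $G$, there is some $w \in I$ with $w \in N_G[u] \triangle N_G[v]$; the corresponding codeword $w^i$ then separates $u^i$ from $v^i$ in $G \Box K_2$ (the cross-fiber codewords $I \times \{3-i\}$ are irrelevant to vertices in $G^i$ since the only neighbor of $u^i$ outside $G^i$ is $u^{3-i}$). The only subtlety is when $w = u$ or $w = v$, but that is fine: e.g.\ if $u \in I$ and $u \notin N_G[v]$, then $u^i \in I_C(u^i) \setminus I_C(v^i)$. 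Second, for two vertices $u^1$ and $v^2$ in \emph{different} fibers: if $u \neq v$, pick $w \in I$ separating $u$ and $v$ in $G$ and argue as before that $w$ lies in a closed neighborhood of exactly one of $u$, $v$ in $G$, hence $w^1$ or $w^2$ separates $u^1$ from $v^2$ — here I must be slightly careful, tracking whether the separating codeword sits in fiber $1$ or fiber $2$, but since $N_{G\Box K_2}[u^1] \cap (I \times \{1\}) = (N_G[u] \cap I) \times \{1\}$ and similarly with the roles swapped, the projections make this transparent.

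The genuinely delicate case — and the one I expect to be the main obstacle — is separating $u^1$ from $u^2$, the two vertices lying over the \emph{same} vertex $u \in V(G)$ in the two different fibers. We have $I_C(u^1) = \big((N_G[u]\cap I) \times \{1\}\big) \cup (\{u\} \cap I)\times\{2\}$ and symmetrically $I_C(u^2) = \big((N_G[u]\cap I)\times\{2\}\big)\cup(\{u\}\cap I)\times\{1\}$. If $u \notin I$, then $I_C(u^1) = (N_G[u]\cap I)\times\{1\}$ and $I_C(u^2)=(N_G[u]\cap I)\times\{2\}$, which are distinct as long as $N_G[u]\cap I \neq \emptyset$ — and that holds because $I$ dominates $G$. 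If $u \in I$, then this is exactly where the hypothesis that $G[I]$ has no isolated vertices does the work: $u$ has a neighbor $z$ in $G$ with $z \in I$, so $z^1 \in I_C(u^1)$ while $z^1 \notin I_C(u^2)$ (the only fiber-$1$ codeword in $I_C(u^2)$ is $u^1$ itself, and $z \neq u$), so $z^1$ separates $u^1$ from $u^2$. Assembling these cases shows $C$ is an ID code of order $2\gid(G)$, giving $\gid(G \Box K_2) \le 2\gid(G)$.
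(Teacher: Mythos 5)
Your proposal is correct and takes essentially the same approach as the paper: both proofs set $C = I \times \{1,2\}$, observe that domination is immediate, and verify separation by a case analysis in which the hypothesis that $G[I]$ has no isolated vertices is used exactly where you use it, namely to separate the two vertices $u^1$ and $u^2$ lying over a codeword $u \in I$. The only difference is organizational (the paper splits cases by membership in $I\times\{1\}$ and $I\times\{2\}$, you split by fibers and projections), so no further comment is needed.
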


\begin{proof}
Let $D = I \times \{1,2\}$, let $D_1 = I \times \{1\}$, and let $D_2 = I \times \{2\}$. It is clear that $D$ dominates $G \Box K_2$ since $I$ 
dominates $G$. Let $x$ and $y$ be distinct vertices of $G \Box K_2$. We show that $D$ separates $x$ and $y$.
Suppose first that at least one of $x$ and $y$ belongs to $D$. Without loss of generality we assume that $x \in D_1$. If $y \in D_1$, then $x$ 
and $y$ have distinct neighbors in $D_2$. If $y \in G^1 - D_1$, then $x$ has a neighbor in $D_2$ but $y$ does not. If $y \in G^2 - D_2$, then 
since $G[I]$ has no isolated vertices it follows that $x$ has a neighbor in $D_1$, but $y$ does not. Finally, suppose that $y \in D_2$. If 
$p_G(x) = p_G(y)$, then $(N[x]\cap D_1) - N[y] \ne \emptyset$ since $G[I]$ has no isolated vertices. If $p_G(x) \ne p_G(y)$, then $x \in N[x] - N[y]$. 
Thus $D$ separates $x$ and $y$ if at least one of them belongs to $D$. Now suppose that $x \in V(G^1) - D_1$. If $y$ also belongs to $V(G^1) - D_1$, 
then $D$ separates $x$ and $y$ because $I$ separates $p_G(x)$ and $p_g(y)$. On the other hand, if $y \in V(G^2) - D_2$, then 
$N[y] \cap D \subseteq D_2$ while $N[x] \cap D \subseteq D_1$ and thus $D$ separates $x$ and $y$.
\end{proof}

If we do not require that the subgraph of $G$ induced by a minimum ID code has no isolated vertices, then the conclusion may not hold.
As an example, let $X = \{1, 2, 3, 4\}$ and let $Y = \{A : A \subset \{1,2,3,4\} \ \text{and} \ |A|\ge 2\}$. Construct a  bipartite graph $G$ where
$V(G) = X \cup Y$.  In $G$ the vertex $j \in X$ is adjacent to the vertex $A \in Y$ exactly when $j \in A$.
It is clear that $X$ is an identifying code in $G$ and it then follows by Proposition~\ref{prop:upper}
that $\gid(G) \ge \log_2(|V(G)|+1) = 4$.  It can be easily verified that $\gid(G \Box K_2) = 9$, which shows that the conclusion of
Theorem~\ref{thm:twicegamma} does not hold for this graph.

The upper bound given in Theorem~\ref{thm:twicegamma} is sharp.  To see this we consider the infinite class of so-called corona graphs.
For a given graph $H$ the {\it corona} of $H$ is the graph constructed from $H$ by adding a single (new) vertex of
degree $1$ adjacent to each vertex of $H$.  The corona of $H$ is denoted by $H \circ K_1$. Suppose that $H$ is twin-free and connected.  The set of vertices in the original
graph $H$ is a minimum dominating set of $H \circ K_1$ and also separates all pairs of vertices in this corona since $H$ is twin-free.
Consequently, $\gid(H \circ K_1)=|V(H)|$.  As the following proposition shows, we can also determine the identifying code number of the prisms of
a more general class of graphs that includes these coronas.  This result will also then yield an infinite family of graphs that achieve the upper bound given
in Theorem~\ref{thm:twicegamma}.

Let $n$ be any positive integer larger than 1.  The class of graphs $\mathcal{H}_n$  consists of all the finite graphs that can be
obtained from any connected graph of order $n$ by adding at least one new vertex of degree 1 adjacent to each of these $n$ vertices. (Note that
$\mathcal{H}_n$  contains the corona of each connected graph of order $n$.)

\begin{prop}\label{prop:gencorona} If $H \in \mathcal{H}_n$, then $\gid(H\Box K_2) = |V(H)|$.
\end{prop}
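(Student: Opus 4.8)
The plan is to prove the two inequalities $\gid(H\Box K_2)\ge|V(H)|$ and $\gid(H\Box K_2)\le|V(H)|$ separately. Throughout, write $A=\{a_1,\dots,a_n\}$ for the vertices of the connected core of order $n$ from which $H$ is built, and for $i\in[n]$ let $B_i\ne\emptyset$ be the set of new degree-$1$ vertices attached to $a_i$; put $B=\bigcup_{i=1}^{n} B_i$, so that $V(H)=A\cup B$ and $|V(H)|=n+|B|$. A preliminary fact I would record is that $H$ is twin-free: two distinct vertices of $B_i$ have different closed neighborhoods; a vertex of $B_i$ has a $2$-element closed neighborhood while $|N_H[a_j]|\ge 3$ for every $j$ (the core is connected of order $n\ge2$, so each $a_j$ has a core-neighbor and a pendant); and $a_i$ and $a_j$ differ since $\emptyset\ne B_i\subseteq N_H[a_i]\setminus N_H[a_j]$.

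For the upper bound I would exhibit the $H$-fiber $C=V(H^1)$, which has cardinality $|V(H)|$, as an identifying code of $H\Box K_2$. It dominates since each $w^2$ is adjacent to $w^1\in C$. For separation of distinct vertices $x,y$ there are three routine cases. If $x=v^1$ and $y=w^1$, then $I_C(x)=N_H[v]\times\{1\}$ and $I_C(y)=N_H[w]\times\{1\}$ are distinct because $H$ is twin-free. If $x=v^2$ and $y=w^2$, then $I_C(x)=\{v^1\}\ne\{w^1\}=I_C(y)$. If $x=v^1$ and $y=w^2$, then $I_C(y)=\{w^1\}$ is a singleton while $|I_C(x)|=|N_H[v]|\ge 2$, since $H$ has no isolated vertex; hence $I_C(x)\ne I_C(y)$. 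This gives $\gid(H\Box K_2)\le|V(H)|$.

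The substantive direction is the lower bound. Here I would take an arbitrary identifying code $C$ of $H\Box K_2$ and partition $V(H\Box K_2)$ into the blocks $S_i=\{a_i^1,a_i^2\}\cup\{b^j: b\in B_i,\ j\in[2]\}$, $i\in[n]$, reducing the task to the local estimate $|C\cap S_i|\ge 1+|B_i|$ for every $i$, after which summing over $i$ yields $|C|\ge n+|B|=|V(H)|$. To prove the local estimate, fix $i$, set $t=|B_i|$, and note that for $b\in B_i$ we have $N_{H\Box K_2}[b^1]=\{b^1,b^2,a_i^1\}$ and $N_{H\Box K_2}[b^2]=\{b^1,b^2,a_i^2\}$. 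Two facts follow: (a) $I_C(b^1)\ne I_C(b^2)$ forces $C\cap\{a_i^1,a_i^2\}\ne\emptyset$; and (b) at most one $b\in B_i$ can satisfy $C\cap\{b^1,b^2\}=\emptyset$, since two distinct such vertices $b,c$ would give $I_C(b^1)=C\cap\{a_i^1\}=I_C(c^1)$. If every $b\in B_i$ has $|C\cap\{b^1,b^2\}|\ge 1$, then these $t$ disjoint pairs account for $t$ codewords inside $S_i$ and (a) contributes at least one more, so $|C\cap S_i|\ge t+1$. Otherwise there is a unique $b_0\in B_i$ with $C\cap\{b_0^1,b_0^2\}=\emptyset$; since $C$ dominates $b_0^1$ and $b_0^2$, both $a_i^1$ and $a_i^2$ lie in $C$, while each of the remaining $t-1$ vertices of $B_i$ contributes at least one codeword among its two copies, so again $|C\cap S_i|\ge 2+(t-1)=t+1$.

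Both directions are largely mechanical; the point requiring the most care is fact (b) together with the case split in the lower bound. It is exactly the pendant whose two prism-copies both avoid $C$ that forces both $a_i^1$ and $a_i^2$ into the code, which is what keeps the count in each block at $1+|B_i|$ rather than dropping to $|B_i|$. Combining the two bounds gives $\gid(H\Box K_2)=|V(H)|$.
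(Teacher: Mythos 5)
Your proof is correct and follows essentially the same route as the paper: the fiber $V(H^1)$ for the upper bound, and for the lower bound the same block decomposition (your $S_i$ are the paper's $A_i$) with the same local count $|C\cap S_i|\ge |B_i|+1$ driven by the observations that separating $b^1$ from $b^2$ forces a copy of $a_i$ into the code and that at most one pendant can have both copies outside $C$. You merely spell out details the paper leaves as ``easily verified'' (twin-freeness of $H$ and the upper-bound case check), and your case split is organized a bit more uniformly, but the argument is the same.
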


\begin{proof} Suppose $H \in \mathcal{H}$, let $u_1, \dots, u_n$ represent the vertices of the underlying graph  of order $n$, and
for each $i \in [n]$ let $x_{i,1}, \dots, x_{i,k_i}$ represent the vertices of degree 1 adjacent to $u_i$. One can easily verify
that $V(H^1)$ is an ID code for $H \Box K_2$. Hence $\gid(H\Box K_2) \le |V(H)|$. Suppose that $C$ is an ID code for $H\Box K_2$.
For each $i \in [n]$, let
\[A_i = \left( \bigcup_{j=1}^{k_i} \{(x_{i,j},1), (x_{i,j},2)\}\right) \cup \{(u_i,1),(u_i,2)\}.\]
We claim that $|A_i \cap C| \ge k_i+1$ for each $i \in [n]$. Note first that if $\{(x_{i,j},1), (x_{i,j}, 2)\} \cap C = \emptyset$
for some $1 \le j \le k_i$, then $\{(u_i, 1), (u_i, 2)\} \subseteq C$ since $C$ dominates $H \Box K_2$. If $k_i=1$, then we are done.
So assume that $k_i >1$. If there exists $\ell \ne j$ such that
\[\{(x_{i,j},1), (x_{i,j},2),(x_{i,\ell},1),(x_{i,\ell},2)\} \cap C = \emptyset,\]
then $C$ does not separate $(x_{i,j},1)$ and $(x_{i,\ell},1)$. So in this case, $|A_i \cap C| \ge k_i +1$.

Next, suppose $|\{(x_{i,j},1), (x_{i,j},2)\} \cap C| \ge 1$ for each $1 \le j \le k_i$. If some $j$ satisfies
$\{(x_{i,j},1),(x_{i,j},2)\} \subseteq C$, then we are done. So we may assume $|\{(x_{i,j},1),(x_{i,j},2)\}\cap C| =1$ for each
 $1 \le j\le k_i$. However, in this case one of $(u_i,1)$ or $(u_i,2)$ is in $C$ for otherwise $(x_{i,j},1)$ and $(x_{i,j},2)$
 are not separated. Thus, $|A_i \cap C| \ge k_i+1$ in each case. This shows that $|C| \ge \sum_{i=1}^n |A_i \cap C| \ge \sum_{i=1}^n (k_i + 1) = |V(H)|$.
\end{proof}

If $H$ is connected and twin-free, then by Proposition~\ref{prop:gencorona} we see that the corona $H \circ K_1$ is a graph that achieves the upper bound in Theorem~\ref{thm:twicegamma}. Hence this bound is achieved for infinitely many graphs.

\section{Lower Bound}\label{lower}

As mentioned in Section~\ref{sec:intro}, Hartnell and Rall show in~\cite{hr2004} that $\gamma(G\Box K_2) \ge \gamma(G)$ and we would naturally 
expect that $\gid(G\Box K_2) \ge \gid(G)$ to be true as well. However, the same projection argument that was used in~\cite{hr2004} creates 
complications when applied to an ID code. In particular, given an ID code $C$ of $G\Box K_2$, $p_G[C]$ need not be an ID code of 
$G$ since $p_G[C]$ may induce isolated edges. However, we show in the following result that we can construct an ID code of $G$ from $p_G[C]$.

\begin{theorem}\label{thm:lowerbound} For any twin-free graph $G$, $\gid(G\Box H) \ge \gid(G)\rho_2(H)$.
\end{theorem}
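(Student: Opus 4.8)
The plan is to take a minimum ID code $C$ of $G\Box H$, slice it according to the $G$-fibers, and single out a set $S$ of fibers that are mutually "far apart" in $H$ — that is, a maximum $2$-packing of $H$ realized inside $V(H)$. For each $h \in S$ I would like to say that $C \cap V(G^h)$, projected to $G$, contributes at least $\gid(G)$ codewords, which would immediately give $|C| \ge \gid(G)\,\rho_2(H)$ since the fibers indexed by $S$ are pairwise disjoint. So the real content is a single-fiber claim: if $S$ is a $2$-packing of $H$ and $h\in S$, then the projection to $G$ of the codewords of $C$ that lie "near" $G^h$ must form an ID code of $G$. Here "near" should mean in $G^h$ itself together with the fibers $G^{h'}$ for $h'$ adjacent to $h$ in $H$; because $S$ is a $2$-packing, the closed neighborhoods (in $H$) of distinct elements of $S$ are disjoint, so these local pieces of $C$ are disjoint across $S$, which is what lets the counts add up.

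The heart of the argument is therefore the following local statement, which I would isolate as a lemma: for a fixed $h\in V(H)$, let $C_h = C\cap V(G^{N[h]})$ (the codewords sitting in $G^h$ or in a fiber over a neighbor of $h$), and let $C_h' = p_G[C_h] \subseteq V(G)$. I claim $C_h'$ is an ID code of $G$. Domination is easy: every vertex $g^h \in V(G^h)$ is dominated in $G\Box H$ by some codeword $c\in C$, and $c$ must lie in $N[g^h]$, which is contained in $V(G^{N[h]})$; projecting, $p_G(c)\in N[g]$, so $C_h'$ dominates $G$. For separation, take distinct $a,b\in V(G)$; since $C$ is an ID code of $G\Box H$, it separates $a^h$ and $b^h$, so there is a codeword $c\in C$ in exactly one of $N[a^h]$, $N[b^h]$. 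Again $c\in V(G^{N[h]})$, so $c\in C_h$; I then need that $p_G(c)$ separates $a$ and $b$ in $G$, i.e. that $p_G(c)$ lies in exactly one of $N[a]$, $N[b]$. This is the step that needs care: adjacency of $c$ to $a^h$ in the product can come either from a $G$-edge (projecting to a $G$-edge at $a$) or from an $H$-edge (forcing $p_G(c)=a$); one must check the projected membership/non-membership genuinely transfers, and in particular handle the case where $p_G(c)$ equals $a$ or $b$. I expect this to go through but it is where the bookkeeping must be done honestly, and it is the main obstacle.

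Once the lemma is in hand, the assembly is short. Let $S$ be a maximum $2$-packing of $H$, so $|S| = \rho_2(H)$. For distinct $h_1,h_2\in S$ we have $N[h_1]\cap N[h_2]=\emptyset$ (distance at least $3$), hence $V(G^{N[h_1]})$ and $V(G^{N[h_2]})$ are disjoint, so the sets $C_{h_1}, C_{h_2}$ are disjoint subsets of $C$. By the lemma each $C_h'$ is an ID code of $G$, so $|C_h| \ge |C_h'| \ge \gid(G)$. Therefore
\[
|C| \ \ge\ \sum_{h\in S} |C_h| \ \ge\ \sum_{h\in S}\gid(G) \ =\ \rho_2(H)\,\gid(G),
\]
and since $C$ was a minimum ID code of $G\Box H$ this gives $\gid(G\Box H)\ge \gid(G)\rho_2(H)$, as desired. (One should note $G$ twin-free guarantees $\gid(G)$ is defined; $G\Box H$ is twin-free by the earlier proposition when $H$ has no isolated vertices, and if $\rho_2(H)=0$ or the relevant codes are empty the inequality is vacuous, so these degenerate cases cause no trouble.)
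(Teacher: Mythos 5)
There is a genuine gap, and it sits exactly where you flagged it: the local lemma that $C_h' = p_G\bigl[C \cap (V(G)\times N_H[h])\bigr]$ separates all pairs of $G$ is not established by your argument, and it is false in general. Your separation step takes a codeword $c$ with $c \in N[(a,h)]$ and $c \notin N[(b,h)]$ and wants $p_G(c)$ to lie in exactly one of $N[a]$, $N[b]$. This works when $c$ lies in the fiber $G^h$, but in the remaining case $c = (a,h')$ with $hh' \in E(H)$ the condition $c \notin N[(b,h)]$ holds automatically for every $a \ne b$ (adjacency in the product would force either $a=b$ or $h'=h$), so it carries no information about $G$; meanwhile $p_G(c) = a$ may perfectly well lie in $N[a] \cap N[b]$ when $ab \in E(G)$, and then this witness separates nothing after projection. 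Concretely, two adjacent vertices $a,b$ of $G$ can be separated in $G \Box H$ entirely by ``vertical'' codewords sitting in the columns of $a$ and $b$ themselves, and all such codewords project into $N[a] \cap N[b]$. This is precisely the obstruction the paper points out before the theorem: for $H=K_2$ your set $C_h'$ is exactly $p_G[C]$, and the paper notes that $p_G[C]$ need not be an ID code of $G$ because the projection may induce isolated edges. So the single-fiber claim your whole counting rests on is unproved, and you offer no repair.

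The paper's proof accepts this failure and works around it: it projects only the codewords of the single fiber $G^h$ to get $C'$, analyzes the equivalence classes of ``restricted twins'' (vertices not separated by $C'$), and in Claims 1 and 2 constructs a set $W \subseteq V(G) - C'$ that dominates the undominated class and separates within each class, with $|W| \le |C \cap (V(G)\times N_H(h))|$ --- each unresolved vertex or unresolved pair is charged to a codeword in a neighboring fiber. Then $W \cup C'$ is an ID code of $G$ of size at most $|C \cap (V(G)\times N_H[h])|$, which is the weaker but correct local statement. Your final assembly over a maximum $2$-packing (disjointness of the sets $V(G)\times N_H[h]$ and summing $\gid(G)$ over $\rho_2(H)$ fibers) coincides with the paper's, so the missing ingredient is exactly this replacement of ``the projection over $N_H[h]$ is an ID code'' by ``the projection of $G^h$ can be completed to an ID code of $G$ using at most as many new vertices as there are codewords over $N_H(h)$.''
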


\begin{proof}
Let $C$ be a minimum ID code of $G\Box H$ and fix a vertex $h \in V(H)$. Let $C' = p_G[C \cap V(G^h)]$. If $C'$ is an ID code of $G$, then we are 
done. So assume there exists at least one pair of vertices $x,y \in V(G)$ such that
$I_{C'}(x) = I_{C'}(y)$. For any pair $x,y$ where $I_{C'}(x) = I_{C'}(y)$, we shall say that $x$ and $y$ are {\it restricted twins with respect 
to $C'$}. Note that $x \sim y$ if $x$ and $y$ are restricted twins is an equivalence relation. It is clear that $x \sim x$ and if $x \sim y$ and 
$y \sim z$ then $I_{C'}(x) = I_{C'}(y) = I_{C'}(z)$. Thus, $y \sim x$ and $x \sim z$. We let $R(x)$ represent the equivalence class of $x$, 
i.e. the set of restricted twins of $x$. It follows that $R(x) \cap R(y) = \emptyset$ or $R(x) = R(y)$ for all $x, y \in V(G)$.

Let $R(a_1), \dots, R(a_m)$  be a complete set of distinct equivalence classes of $\sim$ restricted to $N[C']$, and 
let $R(a_0) = V(G) - N[C']$. Note that $N[C'] = \cup_{i=1}^m R(a_i)$. If $R(a_0) \ne \emptyset$, then we assume $a_0 \in R(a_0)$. Furthermore, 
we may assume that there exists some $m_1 \in [m]$ and we can reindex the $a_i$s if necessary so that $|R(a_i)| >1$ for each $i \in [m_1]$ and 
$|R(a_i)| = 1$ for all $i > m_1$.

{\underline{Claim 1}} If $R(a_0) \ne \emptyset$, then we can choose a set of $|R(a_0)|$ vertices from $V(G)-C'$ that dominates and separates 
each pair of vertices of $R(a_0)$.
\vskip2mm
{\underline{Proof}} We proceed by induction on the cardinality of $R(a_0)$. Suppose first that $R(a_0) = \{a_0\}$. It is clear that $\{a_0\}$ 
dominates and separates $R(a_0)$. Next, assume that $R(a_0) = \{a_0,v\}$. If $a_0$ is not adjacent to $v$, then $\{a_0,v\}$ dominates and
separates $R(a_0)$. So assume that $a_0$ is adjacent to $v$. If $(V(G) - C') \cap N[a_0] = (V(G) - C') \cap N[v]$, then $a_0$ and $v$ are twins in 
$G$ since $I_{C'}(a_0) = I_{C'}(v)$. So either there exists $w \in (V(G) - C') \cap (N[a_0]- N[v])$ or there exists $w \in
 (V(G) - C') \cap (N[v]- N[a_0])$. In either case, $\{a_0, w\}$ separates $R(a_0)$.

 Assume that when $|R(a_0)| = k$, we can choose a set of $k$ vertices to dominate and separate each pair of vertices of $R(a_0)$. Suppose that 
 $|R(a_0)| = \{u_1, \dots, u_k, u_{k+1}\}$. By the inductive hypothesis, there exists a set $W \subseteq V(G)- C'$ that dominates and separates 
 each pair of vertices of $R(a_0) - \{u_{k+1}\}$ and $|W|=k$. If $W$ dominates and separates each pair of vertices in $R(a_0)$, then we are done. 
 So first assume that $W$ does not dominate $u_{k+1}$. Note that since $W$ dominates $R(a_0) - \{u_{k+1}\}$, then $W \cap N[u_{k+1}] \ne W \cap N[u_j]$ 
 for all $1 \le j \le k$. Thus, $W' = W \cup \{u_{k+1}\}$ is a set of $k+1$ vertices that both dominates and separates each pair of vertices of $R(a_0)$.

 Next, suppose that $W$ dominates $u_{k+1}$ but there exists some $j \in [k]$ such that $W \cap N[u_{k+1}] = W \cap N[u_j]$. Note that if there 
 exists $i \ne j$ such that $W \cap [u_i] = W \cap N[u_{k+1}]$, then $W$ does not separate $u_i$ and $u_j$, which is a contradiction. Thus, $u_j$ 
 is the only vertex of $R(a_0) - \{u_{k+1}\}$ that satisfies $W \cap N[u_{k+1}] = W \cap N[u_j]$. There exists a vertex in $V(G) - (W \cup C')$ that 
 is adjacent to exactly one of $u_{k+1}$ or $u_j$ for otherwise $u_{k+1}$ and $u_j$ are twins in $G$. Assume first that there exists 
 $z \in (N[u_j] - N[u_{k+1}]) - (W \cup C')$. It follows that $W' = W \cup \{z\}$ separates every pair of vertices of $R(a_0)$. Otherwise, there 
 exists $z \in (N[u_{k+1}] - N[u_j]) - (W \cup C')$ and $W' = W \cup \{z\}$ separates every pair of vertices in $R(a_0)$. In either case, 
 we have found a set of $|R(a_0)|$ vertices in $V(G) - C'$ that dominates and separates each pair of vertices in $R(a_0)$.\smallqed

 {\underline{Claim 2}} We can choose a set of $|R(a_i)| - 1$ vertices from $V(G)- C'$ that separates each pair of vertices of $R(a_i)$ for $i \in [m]$.
 \vskip2mm
 {\underline{Proof}} First, let $m_1 < i \le m$. Note that $R(a_i) = \{a_i\}$ in which case there is no need to choose any vertices to 
 separate $a_i$ from itself. Now suppose $i \in [m_1]$. As in the proof of Claim 2, we proceed by induction on the cardinality of $R(a_i)$. 
 Suppose first that $R(a_i) = \{a_i, v\}$. If $a_i$ is not adjacent to $v$, then it follows that $a_i$ and $v$ are not vertices of $C'$. 
 Moreover, $a_i$ separates $a_i$ and $v$. On the other hand, If $a_i$ is adjacent to $v$, then either there exists $w \in (V(G) - C') \cap (N[a_i]- N[v])$ 
 or there exists $w \in (V(G) - C') \cap (N[v]- N[a_i])$ for otherwise $a_i$ and $v$ are twins in $G$. In either case, $w$ separates $a_i$ and $v$. 
 So we shall assume that when $|R(a_i)| = k$, there exists a set of $k-1$ vertices in $V(G) - C'$ that separates each pair of vertices of $R(a_i)$.

 Suppose that $R(a_i) = \{u_1, \dots u_{k+1}\}$. By the inductive hypothesis, there exists a set $W \subseteq  V(G) - C'$ of cardinality $k-1$ that 
 separates each pair of vertices in $R(a_i) - \{u_{k+1}\}$. If $W$ separates $u_{k+1}$ and $u_j$ for all $j \in [k]$, then we are done. So assume 
 that for some $j \in [k]$ that $W \cap N[u_j] = W \cap N[u_{k+1}]$. If there exists $1 \le i\le k,  i\ne j$ such that $W \cap N[u_i] = W \cap N[u_{k+1}]$, 
 then $W$ does not separate $u_i$ and $u_j$, which is a contradiction. Therefore, $u_j$ is the only vertex of $R(a_i) - \{u_{k+1}\}$ that satisfies 
 $W \cap N[u_j] = W \cap N[u_{k+1}]$. Since $u_j$ and $u_{k+1}$ are not twins in $G$, then there exists $z \in V(G) - (W\cup C')$ that is adjacent 
 to exactly one of $u_j$ or $u_{k+1}$. Thus, $W \cup \{z\}$ separates every pair of vertices of $R(a_i)$ and $|W \cup \{z\}| = k$.
 \smallqed

Finally, choose a minimal set $W$ of vertices from $V(G) - C'$ that separates every pair of vertices from $R(a_i)$ for all $0 \le i \le m$ 
and dominates $R(a_0)$, which we know exists from Claim 1 and Claim 2. Note that $W \cup C'$ dominates every vertex of $V(G)$ since every 
vertex $v$ not dominated by $C'$ satisfies $v \in R(a_0)$ and $W$ dominates $R(a_0)$. Next, note that if $C'$ does not separate a pair of 
vertices, say $x, y \in V(G)$, then there exists $a_i$ such that $\{x, y\} \subseteq R(a_i)$ for some $0 \le i \le m$. In this case, some 
vertex of $W$ separates $x$ and $y$. Thus, $W \cup C'$ is an ID code of $G$ and $\gid(G) \le |W \cup C'|$. We claim that 
$|W \cup C'| \le |C \cap (V(G) \times N_H[h])|$. Indeed, if $(u,h)\in V(G^h)$ where $u \in R(a_0)$, then there exists $h' \in V(H)$ such 
that $hh'\in E(H)$ and $(u,h') \in C$ since $C$ is an ID code of $G \Box H$. Moreover, for each  $1 \le i \le m$, consider the set 
$S_i = \{(u,h)\in V(G^h) : u \in R(a_i)\}$. Since $C$ is an ID code of $G \Box H$, $|C \cap (V(G) \times N_H(h))|\ge |S_i|-1$. Thus, 
$|W| \le |C \cap (V(G) \times N_H(h))|$, which implies that
\begin{eqnarray*}
|W \cup C'| &=& |W| + |C'|\\
&\le& |C \cap (V(G) \times N_H(h))| + |C \cap V(G^h)|\\
&=& |C \cap (V(G) \times N_H[h])|.
\end{eqnarray*}

Notice that the above argument shows that there exist at least $\gid(G)$ codewords of $C$ in $V(G) \times N_H[h]$. Therefore, if we choose 
a maximum $2$-packing, $T$, of $H$ and apply the same argument to each vertex of $T$, then the desired result follows.
\end{proof}

We call the reader's attention to the fact that Theorem~\ref{thm:lowerbound} does not require that $H$ be twin-free. Thus, an immediate 
consequence of Theorem~\ref{thm:lowerbound} is the following.

\begin{cor}
For any twin-free graphs $G$ and $H$, \[\gid(G\Box H) \ge \max\{\gid(G)\rho_2(H), \rho_2(G)\gid(H)\}.\]
\end{cor}

Next, we show that the bound given in Theorem~\ref{thm:lowerbound} is indeed sharp. For the remainder of this section, we consider only Cartesian products of the form $G \Box K_2$. Note that by Corollary~\ref{cor:upper}, $\gid(G \Box K_2) > \gid(G)$ when $\gid(G) \le 3$ as $\gid(G) \le |V(G)|-1$ for all graphs. So the first case we consider is when $\gid(G) = 4$. 

Surprisingly, the class of graphs for which $\gid(G\Box K_2) = \gid(G) = 4$ is a subclass of the graphs which satisfy $\gid(G) = |V(G)| - 1$. Foucaud et al. classified all such graphs that satisfy $\gid(G) = |V(G)| - 1$ in \cite{extremalID}. For ease of reference, we include the description of this class of graphs here along with their result.

For any integer $k\ge 1$, let $A_k = (V_k, E_k)$ be the graph with vertex set $V_k = \{x_1, \dots, x_{2k}\}$ and edge set $E_k = \{x_ix_j : |i-j| \le k-1\}$. So for $k\ge 2$, $A_k = P_{2k}^{k-1}$ and $A_1 = \overline{K_2}$. Let $\mathcal{A}$ be the closure of
$\{A_i : i \in \mathbb{N} \}$ with respect to the join operation $\bowtie$. Figure~\ref{fig:extremal} depicts several graphs in $\mathcal{A}$.
\vskip2mm

\begin{figure}[h!]
\begin{center}
\begin{tikzpicture}

\vertex(a1) at (0,0)[scale=.65, fill=black]{};
\vertex(a2) at (1,0)[scale=.65, fill=black]{};
\node(1) at (0.5,-1)[]{$A_1$};
\draw(1);
\vertex(b1) at (4,0)[scale=.65, fill=black]{};
\vertex(b2) at (5,0)[scale=.65, fill=black]{};
\vertex(b3) at (6,0)[scale=.65, fill=black]{};
\vertex(b4) at (7,0)[scale=.65, fill=black]{};
\node(2) at (5.5,-1)[]{$A_2$};
\draw(2);
\vertex(c1) at (6,-3.5)[scale=.65, fill=black]{};
\vertex(c2) at (7,-3.5)[scale=.65, fill=black]{};
\vertex(c3) at (8,-3.5)[scale=.65, fill=black]{};
\vertex(c4) at (9,-3.5)[scale=.65, fill=black]{};
\vertex(c5) at (10,-3.5)[scale=.65, fill=black]{};
\vertex(c6) at (11,-3.5)[scale=.65, fill=black]{};
\node(3) at (8.5,-4)[]{$A_3$};
\draw(3);
\vertex(d1) at (1, -3)[scale=.65, fill=black]{};
\vertex(d2) at (2,-3)[scale=.65, fill=black]{};
\vertex(d3) at (0,-4)[scale=.65, fill=black]{};
\vertex(d4) at (1,-4)[scale=.65, fill=black]{};
\vertex(d5) at (2,-4)[scale=.65, fill=black]{};
\vertex(d6) at (3,-4)[scale=.65, fill=black]{};
\node(4) at (-1, -3.5)[]{$A_1 \bowtie A_2$};
\draw(4);
\vertex(e1) at (10, -.5)[scale=.65, fill=black]{};
\vertex(e2) at (10.5, 0)[scale=.65, fill=black]{};
\vertex(e3) at (11.5, 0)[scale=.65, fill=black]{};
\vertex(e4) at (12, -.5)[scale=.65, fill=black]{};
\vertex(e5) at (10.5, -1.25)[scale=.65, fill=black]{};
\vertex(e6) at (11.5, -1.25)[scale=.65, fill=black]{};
\node(5) at (11, -1.75)[]{$A_1\bowtie A_1 \bowtie A_1$};
\draw(5);

\path

	(b1) edge (b2)
	(b2) edge (b3)
	(b3) edge (b4)
	(c1) edge (c2)
	(c1) edge[bend left=20] (c3)
	(c2) edge (c3)
	(c2) edge[bend left=20] (c4)
	(c3) edge (c4)
	(c3) edge[bend left=20] (c5)
	(c4) edge (c5)
	(c4) edge[bend left=20] (c6)
	(c5) edge (c6)
	(d1) edge (d3)
	(d1) edge (d4)
	(d1) edge (d5)
	(d1) edge (d6)
	(d2) edge (d3)
	(d2) edge (d4)
	(d2) edge (d5)
	(d2) edge (d6)
	(d3) edge (d4)
	(d4) edge (d5)
	(d5) edge (d6)
	(e1) edge (e3)
	(e1) edge (e4)
	(e1) edge (e5)
	(e1) edge (e6)
	(e2) edge (e3)
	(e2) edge (e4)
	(e2) edge (e5)
	(e2) edge (e6)
	(e3) edge (e5)
	(e3) edge (e6)
	(e4) edge (e5)
	(e4) edge (e6);

\end{tikzpicture}
\caption{Examples of graphs in $\mathcal{A}$}
\label{fig:extremal}
\end{center}
\end{figure}
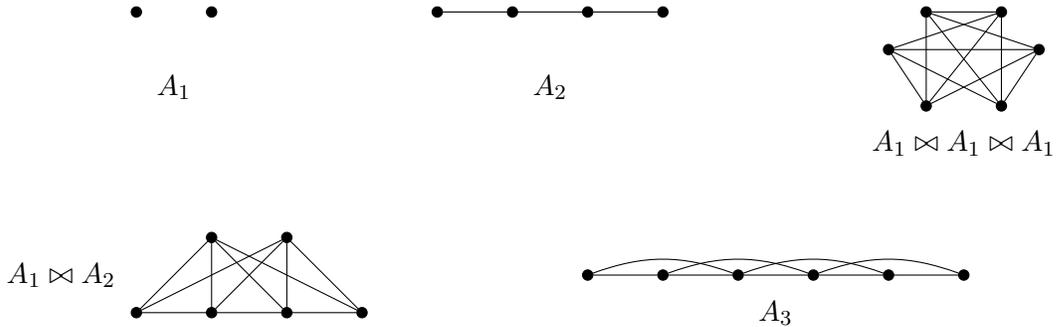

\begin{theorem}[\cite{extremalID}]\label{thm:extremal} Given a connected graph $G$, we have $\gid(G) = |V(G)|-1$ if and only if $G \in \{K_{1,t} \ \mid t \ge 2\} \cup \mathcal{A} \cup (\mathcal{A} \bowtie K_1)$ and $G \not\cong A_1$.
\end{theorem}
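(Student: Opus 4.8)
The plan rests on a reformulation that holds for any connected twin-free graph $G$ of order $n\ge 2$, for which it is known that $\gid(G)\le n-1$. Since any superset of an identifying code is again an identifying code, $G$ has an identifying code of size at most $n-2$ if and only if $V(G)\setminus\{u,v\}$ is an identifying code for some $2$-element set $\{u,v\}$; hence $\gid(G)=n-1$ if and only if, for every $2$-element set $\{u,v\}\subseteq V(G)$, the set $V(G)\setminus\{u,v\}$ is \emph{not} an identifying code. Now $V(G)\setminus\{u,v\}$ fails to be dominating exactly when some vertex $w$ has $N[w]\subseteq\{u,v\}$ (which, as $G$ is connected, forces a pendant-edge configuration), and it fails to separate some pair exactly when there are distinct $x,y$ with $N[x]\,\triangle\,N[y]\subseteq\{u,v\}$. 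Writing $\mathcal D(G)=\{\,N[x]\,\triangle\,N[y] : x\ne y\,\}$ --- all members nonempty by twin-freeness, and only those of size at most $2$, the ``near-twin'' differences, matter here --- the characterization becomes: every $2$-element subset of $V(G)$ either contains a near-twin difference or ``covers a pendant edge.'' All the work lies in deciding which graphs have this property.

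For the ``if'' direction I would first record the structure of $A_k$, $k\ge 2$: in the natural order $x_1,\dots,x_{2k}$, the closed neighborhoods of $A_k=P_{2k}^{k-1}$ are precisely the $k$ prefixes $\{x_1,\dots,x_{k}\},\dots,\{x_1,\dots,x_{2k-1}\}$ together with the $k$ suffixes $\{x_{2},\dots,x_{2k}\},\dots,\{x_{k+1},\dots,x_{2k}\}$. Consecutive vertices are near-twins whose unique distinguishing vertex runs over all of $\{x_2,\dots,x_{2k-1}\}$, while the middle pair $\{x_k,x_{k+1}\}$ is a near-twin pair with difference $\{x_1,x_{2k}\}$; hence every $2$-element subset of $V(A_k)$ contains a member of $\mathcal D(A_k)$, so $\gid(A_k)=2k-1=|V(A_k)|-1$. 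For joins I would use that in $G_1\bowtie G_2$ one has $N[u]\,\triangle\,N[u']=N_{G_1}[u]\,\triangle\,N_{G_1}[u']$ for $u,u'\in V(G_1)$, while $N[u]\,\triangle\,N[v]=\overline{N_{G_1}[u]}\cup\overline{N_{G_2}[v]}$ (complements taken within the respective fibers) for $u\in V(G_1)$, $v\in V(G_2)$; combining this with the fact that the vertices $x_i,x_{i+1}$ of $A_i$ are ``near-universal'' (their fiber-complements are singletons), an induction shows that every join $A_{i_1}\bowtie\cdots\bowtie A_{i_r}$, and every such join further joined with $K_1$, again has every $2$-element subset containing a near-twin difference. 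The stars $K_{1,t}$, $t\ge 2$, are checked directly (the pairs consisting of the center and a leaf are handled by the pendant-edge clause). Note that $A_1=\overline{K_2}$ has $\gid(A_1)=2>|V(A_1)|-1$, so it is correctly excluded as a standalone graph, even though it is permitted as a join factor.

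For the converse, assume $G$ is connected, twin-free and $\gid(G)=n-1$. If $G$ has a vertex of degree at most $1$, one applies the covering condition to pairs of leaves and to leaf/support pairs to force $G\cong K_{1,t}$ for some $t\ge 2$. Otherwise $\delta(G)\ge 2$; if $G$ has a universal vertex then it is unique (two would be twins), so $G=G'\bowtie K_1$ and it remains to place $G'$ in $\mathcal A$. So suppose $G$ has no universal vertex. If $\overline G$ is disconnected, then $G$ is the join of its co-components $G[C_1],\dots,G[C_r]$, each with connected complement; since no factor has a universal vertex, a $2$-element subset of a fixed $V(C_j)$ can be covered in $G$ only by a near-twin difference lying entirely inside $V(C_j)$, so each $C_j$ inherits the strong property ``every $2$-element subset contains a near-twin difference,'' which one classifies and then reassembles. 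The last and hardest case is $\overline G$ connected with $G$ not a star: here one must show that the closed neighborhoods of $G$ are forced to form the prefix/suffix interval family of some $A_k$. The guiding idea is that the near-twin relation must be dense enough that every $2$-element subset contains a difference of size at most $2$; a size-$1$ difference $\{w\}$ comes from an edge $xy$ in which $w$ is the unique vertex separating $x$ and $y$, and chaining such edges forces a linear order of $V(G)$ under which the closed neighborhoods are intervals, after which a counting and parity argument pins the order down to precisely $P_{2k}^{k-1}$.

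I expect this last step --- showing that a join-irreducible, twin-free, non-star graph with the covering property is exactly some $A_k$ --- to be the main obstacle, since one must rule out all competing ``interval-like'' near-twin patterns and show in particular that the order is even and that the neighborhoods nest exactly into the prefix/suffix family; it is the size-$2$ difference $\{x_1,x_{2k}\}$, together with the requirement that every $2$-element subset meeting $x_1$ or $x_{2k}$ also meets an interior vertex $x_c$ with $\{x_c\}\in\mathcal D(G)$, that ultimately rigidifies the structure. A secondary subtlety is that the covering property does not descend cleanly through arbitrary joins --- for example $3K_1$ has it but $3K_1\bowtie P_4$ is not extremal --- so the induction in the join-reducible case has to be run with the sharpened within-fiber statement rather than by naively applying the identity $\gid(H)=|V(H)|-1$ to each factor $H$.
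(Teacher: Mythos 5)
First, note that the paper does not prove this statement at all: Theorem~\ref{thm:extremal} is imported verbatim from Foucaud et al.~\cite{extremalID}, so your attempt has to be judged on whether it stands alone as a proof. Your opening reduction is correct: granting the known bound $\gid(G)\le |V(G)|-1$ for connected twin-free graphs and the fact that supersets of ID codes are ID codes, $\gid(G)=n-1$ is equivalent to the condition that every $2$-set $\{u,v\}$ either contains some $N[x]\triangle N[y]$ of size at most $2$ or satisfies $N[w]\subseteq\{u,v\}$ for some $w$. Your verification of the ``if'' direction is also essentially right: the prefix/suffix description of closed neighborhoods in $A_k$, the resulting list of singleton differences $\{x_2\},\dots,\{x_{2k-1}\}$ together with $\{x_1,x_{2k}\}$, the join identities for symmetric differences within and across fibers, and the role of the two ``near-universal'' middle vertices are all accurate, although the induction over an arbitrary join $A_{i_1}\bowtie\cdots\bowtie A_{i_r}(\bowtie K_1)$ is only gestured at rather than carried out.

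The genuine gap is the converse, which is the entire content of the theorem. For the join-irreducible, non-star case you offer only a guiding idea --- chain the singleton differences to get a linear order with interval neighborhoods, then ``a counting and parity argument pins the order down to precisely $P_{2k}^{k-1}$'' --- and you yourself call this the main obstacle; nothing is proved about why singleton differences must exist at all in this case, why they chain through the whole vertex set, why the resulting interval family must be exactly the prefix/suffix family of some $A_k$, or why the order must be even. In the join-reducible case the situation is similar: your own example ($3K_1$ has the covering property, yet $3K_1\bowtie P_4$ is not extremal) shows that the within-fiber covering property of the co-components is not the right inductive invariant, but you never formulate or prove the ``sharpened within-fiber statement'' you appeal to; what is actually needed is that each co-component is itself some $A_i$ (with at most one $K_1$), which is essentially the whole classification again. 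As it stands, the proposal is a correct reformulation plus a verified easy direction, but the characterization itself --- the substance of Foucaud et al.'s result --- remains unproved.
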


We now show that a subclass of $\mathcal{A}$ contains precisely those graphs for which $\gid(G\Box K_2) = \gid(G) = 4$.

\begin{theorem}\label{thm:gid4} For any connected twin-free graph $G$ such that $\gid(G)=4$, $\gid(G \Box K_2) =\gid(G)$ if and only if $G \in \mathcal{A}\bowtie K_1$.
\end{theorem}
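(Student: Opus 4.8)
The plan is to use the hypothesis $\gid(G\Box K_2)=\gid(G)=4$ to force $|V(G)|=5$, then to invoke the classification of Theorem~\ref{thm:extremal} to reduce to three named graphs, and finally to settle those by a short direct analysis. First I would pin down the order of $G$: since every twin-free graph $H$ satisfies $\gid(H)\le|V(H)|-1$, the hypothesis $\gid(G)=4$ gives $|V(G)|\ge5$; and if $C$ is a minimum ID code of $G\Box K_2$ with $m_i$ codewords in the fiber $G^i$, so $m_1+m_2=4$, then Lemma~\ref{lem:upperorder} yields $|V(G)|\le\min\{2^{m_1}-1+m_2,\,2^{m_2}-1+m_1\}$. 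Checking the five cases $(m_1,m_2)\in\{(0,4),(1,3),(2,2),(3,1),(4,0)\}$, this minimum is at least $5$ only when $(m_1,m_2)=(2,2)$, where it equals $5$. Hence $|V(G)|=5$, and moreover every minimum ID code of $G\Box K_2$ has exactly two codewords in each fiber.

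Next, since $G$ is connected and twin-free with $\gid(G)=4=|V(G)|-1$, Theorem~\ref{thm:extremal} places $G$ in $\{K_{1,t}:t\ge2\}\cup\mathcal{A}\cup(\mathcal{A}\bowtie K_1)$. Every member of $\mathcal{A}$ has even order, so none has five vertices; a five-vertex graph in $\mathcal{A}\bowtie K_1$ is $X\bowtie K_1$ with $X\in\mathcal{A}$ of order $4$, and the only such $X$ are $A_2=P_4$ and $A_1\bowtie A_1=C_4$; and $K_{1,4}$ is the only five-vertex star. So $G\in\{K_{1,4},\,P_4\bowtie K_1,\,C_4\bowtie K_1\}$. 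As $P_4\bowtie K_1$ and $C_4\bowtie K_1$ belong to $\mathcal{A}\bowtie K_1$, and, conversely, any graph in $\mathcal{A}\bowtie K_1$ with $\gid$ equal to $4$ has order $5$ by Theorem~\ref{thm:extremal} and hence is one of these two, the theorem reduces to proving $\gid(K_{1,4}\Box K_2)>4$ together with $\gid\bigl((P_4\bowtie K_1)\Box K_2\bigr)=\gid\bigl((C_4\bowtie K_1)\Box K_2\bigr)=4$.

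To eliminate $K_{1,4}$, with center $c$, suppose its prism had an ID code $C$ of size $4$; by the first step $C$ meets each fiber in exactly two vertices, and for a leaf $\ell$ we have $N[\ell^1]=\{\ell^1,c^1,\ell^2\}$ and $N[\ell^2]=\{\ell^2,c^2,\ell^1\}$. If neither $c^1$ nor $c^2$ lies in $C$, then domination forces $\{\ell^1,\ell^2\}\cap C\ne\emptyset$ for each of the four leaves, which (two codewords per fiber) splits the leaves into two pairs, one contributing its two copies in $G^1$ and the other its two copies in $G^2$; then a leaf $\ell$ of the first pair has $I_C(\ell^1)=\{\ell^1\}=I_C(\ell^2)$. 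If exactly one of $c^1,c^2$ lies in $C$, say $c^1$, then at most three codewords are leaf copies, so some leaf $\ell$ has $\{\ell^1,\ell^2\}\cap C=\emptyset$ and $\ell^2$ is undominated. If both $c^1$ and $c^2$ lie in $C$, then at least two leaves $\ell$ satisfy $I_C(\ell^1)=\{c^1\}$. Each case is a contradiction, so $\gid(K_{1,4}\Box K_2)\ge5>\gid(K_{1,4})$.

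For $G\in\{P_4\bowtie K_1,\,C_4\bowtie K_1\}$, write the bipartition of $P_4$, respectively $C_4$, as $\{a,b\}\cup\{c,d\}$ and let $v$ be the apex vertex; I would verify directly that $C=\{a^1,b^1,c^2,d^2\}$ is an ID code of $G\Box K_2$: it dominates because $\{a,b\}$ and $\{c,d\}$ each dominate $G$ (as $v$ is universal and each part of the bipartition dominates $P_4$ and $C_4$), so $\{a^1,b^1\}$ dominates $G^1$ and $\{c^2,d^2\}$ dominates $G^2$; and computing $I_C$ over the ten vertices shows all values are pairwise distinct. Hence $\gid(G\Box K_2)\le4$, while Theorem~\ref{thm:lowerbound} (with $\rho_2(K_2)=1$) gives $\gid(G\Box K_2)\ge\gid(G)=4$, so equality holds and both directions follow. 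I expect the only delicate point to be the case analysis eliminating $K_{1,4}$ — one must confirm the three cases are exhaustive and that each produces an undominated vertex or an unseparated pair — while the rest is a counting argument from Lemma~\ref{lem:upperorder} together with two small finite verifications.
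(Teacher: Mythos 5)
Your proposal is correct, and in the converse direction it takes a genuinely different route from the paper. The paper partitions the projection of a minimum ID code $C$ of $G\Box K_2$ into $C_1$, $C_2$, $D$ (vertices with a codeword only in $G^1$, only in $G^2$, or in both) and runs a case analysis on $|D|\in\{2,1,0\}$; in each case it concludes $|V(G)|=5$ and extracts structural facts about $G$ (a triangle, an induced path on four vertices, or a universal vertex whose prism-code forces a neighbor of $x$ in $C_1$) that exclude $K_{1,4}$ from the list supplied by Theorem~\ref{thm:extremal}. You instead get $|V(G)|=5$ and the balanced distribution $(m_1,m_2)=(2,2)$ in one stroke from Lemma~\ref{lem:upperorder} combined with $\gid(G)\le |V(G)|-1$, then invoke Theorem~\ref{thm:extremal} (plus the parity observation that members of $\mathcal{A}$ have even order) to reduce to $\{K_{1,4},\,A_2\bowtie K_1,\,A_1\bowtie A_1\bowtie K_1\}$, and finally eliminate $K_{1,4}$ by a self-contained analysis of $K_{1,4}\Box K_2$: your three cases on how many of $c^1,c^2$ are codewords are exhaustive, and each correctly yields an undominated vertex or an unseparated pair (I checked the subcase bookkeeping against the two-per-fiber constraint). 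This buys a cleaner converse — it leans on an already-proved counting lemma rather than ad hoc case structure, and it avoids the paper's unproved auxiliary claim that the subgraph induced by $C$ has no isolated edge — at the mild cost of a separate finite argument about one specific prism. The forward direction (explicit codes $\{a^1,b^1,c^2,d^2\}$ for $A_2\bowtie K_1$ and $A_1\bowtie A_1\bowtie K_1$, with the lower bound from Theorem~\ref{thm:lowerbound} and $\rho_2(K_2)=1$) coincides with the paper's, and the codes you propose are exactly the paper's codes, so the verification you defer is the same routine check the paper also asserts.
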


\begin{proof} Notice that if $G \in \mathcal{A}\bowtie K_1$ with $\gid(G) = 4$, then $G = A_1 \bowtie A_1 \bowtie K_1$ or $G = A_2 \bowtie K_1$. In either case, we represent the vertices of $A_1 \bowtie A_1$ or $A_2$ by $x_1, x_2, x_3, x_4$. If $G = A_1 \bowtie A_1 \bowtie K_1$, then $C = \{x_1^1, x_2^1, x_3^2, x_4^2\}$ is an ID code of $G \Box K_2$. If $G = A_2 \bowtie K_2$, then $C = \{x_1^1, x_3^1, x_2^2, x_4^2\}$ is an ID code of $G \Box K_2$. Thus, $\gid(G\Box K_2) \le 4$. An application of Theorem~\ref{thm:lowerbound} yields $\gid(G\Box K_2) \ge \gid(G) = 4$. Therefore, $\gid(G\Box K_2) = 4$.

We now show the other direction. That is, let $G$ be a connected twin-free graph such that $\gid(G) = 4 = \gid(G\Box K_2)$. Let $C$ be a minimum ID code of $G\Box K_2$ and partition the projection of $C$ onto $V(G)$, $p_G[C]$, as
\begin{eqnarray*}
C_1 &=& \{v \in V(G) : v^1 \in C \text{ and} \ v^2 \not\in C\}\\
C_2 &=& \{v \in V(G): v^1 \not\in C \text{ and}\  v^2 \in C\}\\
D &=& \{v\in V(G) : v^1 \in C \text{ and} \ v^2 \in C\}.
\end{eqnarray*}

Suppose first that $|C_1| = 1$, $|D|=0$, and let $C_1 = \{v\}$. Thus, $I_C(v^1) = \{v^1\}$, which implies for every $u \in V(G)-\{v\}$, $u^2 \in C$. It follows that $|V(G)| = 4$, which contradicts the assumption that $\gid(G) = 4$. On the other hand, suppose $|C_1|=0$, $|D|=1$ and $D= \{v\}$. There exist precisely two vertices, say $x$ and $y$ in $G$ such that $x^2 \in I_C(x^1)$ and $y^2 \in I_C(y^1)$ as $|C_2|=2$. Every $w^1 \in V(G^1)-\{v^1, x^1, y^1\}$ is dominated only by $v^1$ and this implies that $|V(G)|=4$, which is another contradiction. Thus, $|C_1 \cup D| = 2$ and similarly $|C_2 \cup D| = 2$.

\begin{enumerate}
\item[(1)] Suppose that $|D| = 2$ and let $D = \{u,v\}$. It follows that the order of $G$ is at most $5$, and since $\gid(G) = 4$, we have $|V(G)| = 5$. Theorem~\ref{thm:extremal} guarantees that $G \in \{K_{1,4}, A_1 \bowtie A_1 \bowtie K_1, A_2 \bowtie K_1\}$, pictured below in Figure~\ref{fig:order5}. Note that $uv \in E(G)$ since the subgraph induced by $C$ contains no isolated edge. Furthermore, since $|V(G)|=5$, there exists $w \in V(G)$ such that $w$ is adjacent to both $u$ and $v$. Therefore, $G$ contains a triangle and it follows that $G \in \{ A_1 \bowtie A_1 \bowtie K_1, A_2 \bowtie K_1\}$.

\item[(2)] Suppose that $|D|=1$, meaning $|C_1|=1=|C_2|$, and let $C_1 = \{u\}$, $D=\{v\}$, and $C_2= \{w\}$. Since the subgraph induced by $C$ contains no isolated edges, we may assume without loss of generality that $uv \in E(G)$. This immediately implies that $|V(G)| = 5$ and there exist vertices $x$ and $y$ in $G$ such that $I_C(x^1) = \{u^1\}$ and $I_C(y^1) = \{v^1\}$. Therefore, $G \in \{ A_1 \bowtie A_1 \bowtie K_1, A_2 \bowtie K_1\}$ since the subgraph induced by $x, y, u$, and $v$ is a path.

\item[(3)] Suppose that $|D| = 0$, $|C_1|=2=|C_2|$, and let $C_1 = \{u,v\}$ and $C_2 = \{x,y\}$. Note that $uv \not\in E(G)$ and $xy \not\in E(G)$ since the subgraph induced by $C$ contains no isolated edge. Thus, for any $w \in V(G) - (C_1 \cup C_2)$, $I_C(w^1) = \{u^1, v^1\}$ and $I_C(w^2) = \{x^2, y^2\}$. So $|V(G)|=5$, $N[w] = V(G)$, and $G \in \{K_{1,4}, A_1 \bowtie A_1 \bowtie K_1, A_2 \bowtie K_1\}$. Also, $I_C(x^1) \ne \{x^2\}$ so $x$ has a neighbor in $C_1$. Therefore, we may conclude that $G \in \{ A_1 \bowtie A_1 \bowtie K_1, A_2 \bowtie K_1\}$.

\end{enumerate}
\end{proof}

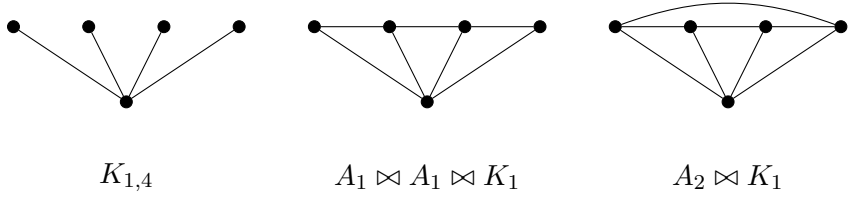
\begin{figure}[h!]
\begin{center}
\begin{tikzpicture}[auto]

	\vertex (1a) at (0,0) [scale=.75pt,fill=black]{};
	\vertex (2a) at (-1.5,1) [scale=.75pt,fill=black]{};
	\vertex (3a) at (-.5,1) [scale=.75pt,fill=black]{};
	\vertex (4a) at (.5,1) [scale=.75pt,fill=black]{};
	\vertex (5a) at (1.5,1) [scale=.75pt,fill=black]{};
	\vertex (1b) at (4,0) [scale=.75pt,fill=black]{};
	\vertex (2b) at (2.5,1) [scale=.75pt,fill=black]{};
	\vertex (3b) at (3.5,1) [scale=.75pt,fill=black]{};
	\vertex (4b) at (4.5,1) [scale=.75pt,fill=black]{};
	\vertex (5b) at (5.5,1) [scale=.75pt,fill=black]{};	
	\vertex (1c) at (8,0) [scale=.75pt,fill=black]{};
	\vertex (2c) at (6.5,1) [scale=.75pt,fill=black]{};
	\vertex (3c) at (7.5,1) [scale=.75pt,fill=black]{};
	\vertex (4c) at (8.5,1) [scale=.75pt,fill=black]{};
	\vertex (5c) at (9.5,1) [scale=.75pt,fill=black]{};	
	\node(a) at (0,-1)[]{$K_{1,4}$};
	\draw(a);
	\node(b) at (4,-1)[]{$A_1 \bowtie A_1 \bowtie K_1$};
	\draw(b);
	\node(c) at (8,-1)[]{$A_2 \bowtie K_1$};
	\draw(c);
	\path
		(1a) edge (2a)
		(1a) edge (3a)
		(1a) edge (4a)
		(1a) edge (5a)
		(1b) edge (2b)
		(1b) edge (3b)
                  (1b) edge (4b)
                  (1b) edge (5b)
                  (2b) edge (3b)
                  (3b) edge (4b)
                  (4b) edge (5b)
                  (1c) edge (2c)
                  (1c) edge (3c)
                  (1c) edge (4c)
                  (1c) edge (5c)
                  (2c) edge (3c)
                  (3c) edge (4c)
                  (4c) edge (5c)
                  (2c) edge[bend left=20] (5c)
		;

\end{tikzpicture}
\end{center}
\caption{Graphs of order $5$ with ID code number $4$}
\label{fig:order5}
\end{figure}

Based on the above result, we next show that for any integer $k \ge 4$, there exists a graph $G$ such that $\gid(G \Box K_2) = \gid(G) = k$.

\begin{theorem}\label{thm:lowerclass} If $G \in \mathcal{A} \cup (\mathcal{A} \bowtie K_1)$ has order at least $5$, then $\gid(G \Box K_2) = \gid(G)$. Moreover, if $G = G_1 \bowtie G_2$ where $G_1, G_2  \in \mathcal{A} \cup (\mathcal{A} \bowtie K_1) - \{A_1, A_2\}$, then \\
$\gid((G_1 \bowtie G_2) \Box K_2) = \gid(G_1 \Box K_2) + \gid(G_2 \Box K_2) + 1$.
\end{theorem}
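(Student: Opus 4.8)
The lower bound is the easy half. If $G\in\mathcal A\cup(\mathcal A\bowtie K_1)$ has order at least $5$, then $G$ is connected and, by Theorem~\ref{thm:extremal}, $\gid(G)=|V(G)|-1$; since a $2$-packing of $K_2$ consists of a single vertex, Theorem~\ref{thm:lowerbound} gives $\gid(G\Box K_2)\ge\gid(G)\rho_2(K_2)=|V(G)|-1$. So the whole content of the first statement is the matching upper bound: the construction of an identifying code of $G\Box K_2$ of cardinality $|V(G)|-1$.

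Write $G$ through its join decomposition $A_{i_1}\bowtie\cdots\bowtie A_{i_r}$, possibly joined with one further $K_1$. The plan is to build the code $C$ by never using the $K_1$-vertex when one is present, by distributing the vertices of each atom $A_{i_\ell}=P_{2i_\ell}^{i_\ell-1}$ between the two fibers $G^1$ and $G^2$, with possibly a single vertex ``doubled'' (both $x^1$ and $x^2$ put in $C$), and, when there is no $K_1$, by leaving exactly one further vertex out so that $|C|=|V(G)|-1$. A handful of small graphs that cannot be split into larger pieces --- the order-$5$ cases already handled in Theorem~\ref{thm:gid4}, together with $A_1\bowtie A_1\bowtie A_1$, $A_1\bowtie A_2$, and similar --- are treated as base cases by direct computation. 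For the general case the essential structural fact is the interval description of the atoms: in $A_k$ the closed neighborhood $N_{A_k}[x_j]$ is an interval of consecutive indices, with $N_{A_k}[x_1]\subsetneq\cdots\subsetneq N_{A_k}[x_k]$ and $N_{A_k}[x_{2k}]\subsetneq\cdots\subsetneq N_{A_k}[x_{k+1}]$, so consecutive closed neighborhoods differ in only one index (two for the middle pair $x_k,x_{k+1}$). Because the vertices of the other join factors are adjacent to all of $A_k$, they cancel out of every symmetric difference $N_G[x_a]\triangle N_G[x_b]$ with $x_a,x_b$ in a common atom; hence separation of a pair lying inside one atom of $G\Box K_2$ is controlled entirely by this interval data, and the fiber assignment (together with the choice of which vertex to double and which to omit) is made precisely so that each such symmetric difference, or else a fiber-mate of one of the two vertices, meets $C$. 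Once $C$ is written down, the remaining verifications are routine: domination holds because each fiber-copy of each atom vertex either has a codeword in its own fiber or is adjacent to all of some other atom; and a pair of vertices in distinct atoms is separated because the two are adjacent in $G$ while one of them has a codeword private to its side.

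The ``moreover'' statement then follows by applying the first statement three times. Provided $G_1,G_2$ (and hence $G_1\bowtie G_2$) have order at least $5$, all three graphs lie in $\mathcal A\cup(\mathcal A\bowtie K_1)$ --- at most one of $G_1,G_2$ can carry a universal vertex, for otherwise those two vertices would be twins in $G_1\bowtie G_2$ --- so
\[
\gid\!\big((G_1\bowtie G_2)\Box K_2\big)=|V(G_1)|+|V(G_2)|-1=(|V(G_1)|-1)+(|V(G_2)|-1)+1=\gid(G_1\Box K_2)+\gid(G_2\Box K_2)+1 .
\]
One can also see the identity directly: the union of an identifying code of $G_1\Box K_2$, an identifying code of $G_2\Box K_2$, and a single further codeword $z$ identifies $G_1\bowtie G_2$, because any vertex on the $G_1$-side and any vertex on the $G_2$-side are adjacent yet each has potential private codewords only on its own side together with the one shared vertex $z$ --- which is exactly why one extra codeword is both needed and enough.

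The step I expect to be the real obstacle is the within-atom analysis for the atoms $A_k$ with $k\ge 3$. There the symmetric differences between consecutive closed neighborhoods are single indices, so the code has essentially no slack; one must fix a single, uniformly described good code for $A_k\Box K_2$ --- it will involve doubling one vertex and spreading the remaining ones carefully between the two fibers so that every single-index symmetric difference is hit --- and then verify that it separates every pair across the two fibers, and finally confirm that the same recipe can be run simultaneously in every atom of a join without the pieces colliding along the join edges.
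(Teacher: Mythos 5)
Your lower-bound half is correct and is exactly the paper's: Theorem~\ref{thm:extremal} gives $\gid(G)=|V(G)|-1$ and Theorem~\ref{thm:lowerbound} gives $\gid(G\Box K_2)\ge\gid(G)$. Your overall plan for the upper bound (induct on the join decomposition, distribute each atom $A_k$ between the two $G$-fibers, never use the $K_1$-vertex) is also the same strategy the paper follows. The problem is that you never actually produce the code. The entire substance of the theorem is the explicit construction and verification: the paper writes down $C=\{x_1^1,\dots,x_{k-1}^1,x_{k+1}^1,\dots,x_{2k-2}^1,x_k^2,x_{2k}^2\}$ for $A_k\Box K_2$ ($k\ge 3$), explicit codes $A\cup B\cup\{x_1^1\}$ and $A\cup B\cup T$ for $(A_k\bowtie A_\ell)\Box K_2$, the inductive augmentations $C\cup\{x_1^1,x_2^1\}$ and $C\cup A_i^1$ when a new atom is joined on, and a separate per-atom alternating construction for the $\mathcal{A}\bowtie K_1$ case, each followed by a case-by-case separation check. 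You explicitly defer all of this (``one must fix a single, uniformly described good code \dots and then verify''), and you even guess a shape (doubling one vertex) that differs from what actually works --- the paper's codes double no vertex; for $A_k$ they omit $x_{2k-1}$ entirely and place $x_k,x_{2k}$ alone in the second fiber. Your interval/symmetric-difference observation about $A_k$ is a reasonable guide for within-atom, within-fiber separation, but it does not by itself handle the cross-fiber pairs $x^1$ versus $y^2$ or the domination of non-codewords, which is where the paper's case analysis does real work. So as it stands this is a proof outline with the decisive step missing, not a proof.

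On the ``moreover'' part, your primary derivation (apply the first statement to $G_1$, $G_2$ and $G_1\bowtie G_2$ and do the arithmetic $|V(G_1)|+|V(G_2)|-1=(|V(G_1)|-1)+(|V(G_2)|-1)+1$) is the paper's argument and is fine, modulo the same caveat about small factors that the paper itself glosses over. Your alternative ``direct'' argument --- that the union of ID codes of $G_1\Box K_2$ and $G_2\Box K_2$ plus one extra codeword identifies $(G_1\bowtie G_2)\Box K_2$ --- is asserted without proof and is not obviously true: a vertex of the $G_1$-side sees \emph{all} fiber-$1$ codewords of the $G_2$-side and vice versa, so separating a $G_1$-side vertex from a $G_2$-side vertex in the same fiber needs an argument, not just the remark that each has ``private'' codewords. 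Drop that aside or prove it; the main route through the first statement is what the theorem needs.
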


\begin{proof}
For the time being, assume that $G \in \mathcal{A}$. We proceed by induction. Write $G = G_1 \bowtie \cdots \bowtie G_m$ where each $G_i \in \{A_j : j \in \mathbb{N}\}$. Suppose first that $G = A_k$ where $k>2$. That is, $G = P_{2k}^{k-1}$ for some $k \ge 3$. We show that \[C = \{x_1^1, \dots, x_{k-1}^1, x_{k+1}^1, \dots, x_{2k-2}^1, x_k^2, x_{2k}^2\}\] is an ID code for $G\Box K_2$ of order $2k-1$. Figure~\ref{fig:construct} (a) depicts $C$ for $A_5 \Box K_2$. Let $u$ and $v$ be any pair of vertices in $G\Box K_2$. One can easily verify that $C$ is a dominating set for $G\Box K_2$ and if $u \in V(G^1)$ and $v \in V(G^2)$, then $C$ separates $u$ and $v$. We check all remaining cases.
\begin{figure}[ht!]
\begin{subfigure}[]{.5\textwidth}
\centering
\begin{tikzpicture}[scale=1.0,style=thick]
	\vertex (0) at (0,0) [scale=.75pt,fill=black]{};
	\vertex (1) at (0.5,0) [scale=.75pt,fill=black]{};
	\vertex (2) at (1,0) [scale=.75pt,fill=black]{};
	\vertex (3) at (1.5,0) [scale=.75pt,fill=black]{};
	\vertex (4) at (2,0) [ scale=.75pt]{};
	\vertex (5) at (2.5,0) [scale=.75pt,fill=black]{};
	\vertex (6) at (3,0) [scale=.75pt,fill=black]{};
	\vertex (7) at (3.5,0) [scale=.75pt,fill=black]{};
	\vertex (8) at (4,0) [scale=.75pt]{};
	\vertex (9) at (4.5,0) [scale=.75pt]{};

	\vertex (0a) at (0,1) [scale=.75pt]{};
	\vertex (1a) at (0.5,1) [scale=.75pt]{};
	\vertex (2a) at (1,1) [scale=.75pt]{};
	\vertex (3a) at (1.5,1) [scale=.75pt]{};
	\vertex (4a) at (2,1) [scale=.75pt,fill=black]{};
	\vertex (5a) at (2.5,1) [scale=.75pt]{};
	\vertex (6a) at (3,1) [scale=.75pt]{};
	\vertex (7a) at (3.5,1)[scale=.75pt]{};
	\vertex (8a) at (4,1)[scale=.75pt]{};
	\vertex (9a) at (4.5,1)[scale=.75pt,fill=black]{};

\end{tikzpicture}
\caption{ID code for $A_5 \Box K_2$}
\end{subfigure}~
\begin{subfigure}[]{.5\textwidth}
\centering
\begin{tikzpicture}[scale=1.0,style=thick]
	\vertex (0) at (0,0) [scale=.75pt,fill=black]{};
	\vertex (1) at (0.5,0) [scale=.75pt]{};
	\vertex (2) at (1.5,0) [scale=.75pt,fill=black]{};
	\vertex (3) at (2,0) [scale=.75pt]{};
	\vertex (4) at (2.5,0) [ scale=.75pt,fill=black]{};
	\vertex (5) at (3,0) [scale=.75pt]{};
	\vertex (6) at (3.5,0) [scale=.75pt,fill=black]{};
	\vertex (7) at (4,0) [scale=.75pt]{};
	\vertex (8) at (4.5,0) [scale=.75pt,fill=black]{};
	\vertex (9) at (5,0) [scale=.75pt]{};

	\vertex (0a) at (0,1) [scale=.75pt]{};
	\vertex (1a) at (.5,1) [scale=.75pt]{};
	\vertex (2a) at (1.5,1) [scale=.75pt]{};
	\vertex (3a) at (2,1) [scale=.75pt,fill=black]{};
	\vertex (4a) at (2.5,1) [scale=.75pt]{};
	\vertex (5a) at (3,1) [scale=.75pt,fill=black]{};
	\vertex (6a) at (3.5,1) [scale=.75pt]{};
	\vertex (7a) at (4,1) [scale=.75pt,fill=black]{};
	\vertex (8a) at (4.5,1)[scale=.75pt]{};
	\vertex (9a) at (5,1)[scale=.75pt,fill=black]{};

\end{tikzpicture}
\caption{ID code for $(A_1 \bowtie A_4) \Box K_2$}
\end{subfigure}
\caption{Examples of ID codes of $A_k \Box K_2$ or $(A_k \bowtie A_{\ell})\Box K_2$}
\label{fig:construct}
\end{figure}
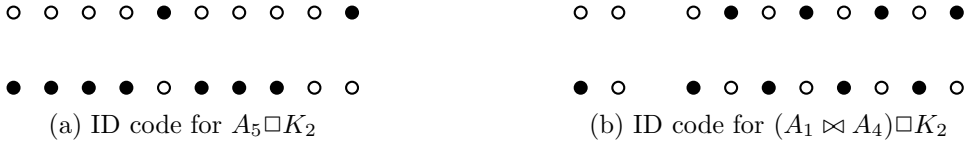

\begin{figure}[h!]
\begin{center}
\begin{tikzpicture}[scale=1.0,style=thick]
	\vertex (0) at (0,0) [scale=.75pt,fill=black]{};
	\vertex (1) at (0.5,0) [scale=.75pt,fill=black]{};
	\vertex (2) at (1,0) [scale=.75pt,fill=black]{};
	\vertex (3) at (1.5,0) [scale=.75pt]{};
	\vertex (4) at (2.5,0) [ scale=.75pt,fill=black]{};
	\vertex (5) at (3,0) [scale=.75pt]{};
	\vertex (6) at (3.5,0) [scale=.75pt,fill=black]{};
	\vertex (7) at (4,0) [scale=.75pt]{};
	\vertex (8) at (4.5,0) [scale=.75pt,fill=black]{};
	\vertex (9) at (5,0) [scale=.75pt]{};
	\vertex (10) at (5.5,0) [scale=.75pt,fill=black]{};
	\vertex (11) at (6,0) [scale=.75pt]{};
	
	\vertex (0a) at (0,1) [scale=.75pt]{};
	\vertex (1a) at (.5,1) [scale=.75pt]{};
	\vertex (2a) at (1,1) [scale=.75pt]{};
	\vertex (3a) at (1.5,1) [scale=.75pt]{};
	\vertex (4a) at (2.5,1) [scale=.75pt]{};
	\vertex (5a) at (3,1) [scale=.75pt,fill=black]{};
	\vertex (6a) at (3.5,1) [scale=.75pt]{};
	\vertex (7a) at (4,1) [scale=.75pt,fill=black]{};
	\vertex (8a) at (4.5,1)[scale=.75pt]{};
	\vertex (9a) at (5,1)[scale=.75pt,fill=black]{};
	\vertex (10a) at (5.5,1) [scale=.75pt]{};
	\vertex (11a) at (6,1) [scale=.75pt,fill=black]{};
\end{tikzpicture}
\caption{Example of ID code of $(A_2 \bowtie A_4)\Box K_2$}
\label{fig:construct2}
\end{center}
\end{figure}
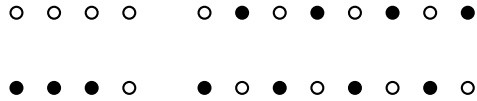

Suppose first that $u = x_i^1$ and $v = x_j^1$ where $1 \le i < j \le 2k$. If $1\le i < j \le k-1$, then $x_{j+(k-1)}^1$ separates $u$ and $v$. 
If $1 \le i \le k-1$ and $j=k$, then $x_k^2$ separates $u$ and $v$. If $1\le i \le k$ and $k+1 \le j \le 2k$, then $x_1^1$ separates $u$ and $v$. 
If $k \le i < j \le 2k-1$, then $x_{i-(k-1)}^1$ separates $u$ and $v$. If $k \le i \le 2k-1$ and $j = 2k$, then $x_{2k}^2$ separates $u$ and $v$.

Next, suppose that $u = x_i^2$ and $v = x_j^2$ where $1 \le i<j \le 2k$. If $i \notin \{k, 2k-1\}$, then $x_i^1$ separates $u$ and $v$. If $i = k$ 
and $j = 2k-1$, then $x_{2k}^2$ separates $u$ and $v$. If $i = k$ and $j = 2k$, then $u$ separates $u$ and $v$. Finally, if $i = 2k-1$ and $j = 2k$, 
then $x_k^2$ separates $u$ and $v$. Thus, $C$ is an ID code of $G$, and we have shown that $\gid(G\Box K_2) \le 2k-1$. On the other hand, 
$G \in \mathcal{A}$ so $\gid(G) = 2k-1$ by Theorem~\ref{thm:extremal}. Thus, by Theorem~\ref{thm:lowerbound} $\gid(G \Box K_2) \ge \gid(G) = 2k-1$, 
which implies that $\gid(G\Box K_2) = \gid(G)$.

Next, suppose $G = A_k \bowtie A_{\ell}$ where $k \in [ \ell]$. Since $G$ has order at least $5$, $\ell \ge 2$. Let $x_1, \dots, x_{2k}$ represent 
the vertices of $A_k$ and $y_1, \dots, y_{2\ell}$ represent the vertices of $A_{\ell}$. We construct an ID code of $G\Box K_2$ based on the 
following three cases, where in each case $A = \{y_{i}^1 : i = 2j+1 \text{ for $0 \le j \le \ell-1$}\}$ and $B = \{y_i^2 : i = 2j \text{ for $1 \le j \le \ell$}\}$.
\begin{enumerate}
\item[1.] Suppose $k = 1$. Note that $\ell \ge 2$ since the order of $G$ is at least $6$. We show that $C = A \cup B \cup \{x_1^1\}$ is an ID 
code for $G\Box K_2$. Figure~\ref{fig:construct} (b) depicts $C$ for $(A_1 \bowtie A_4) \Box K_2$. For $i \in [2]$ and each 
$v^i \in V(G^i)$, $|N_{G^i}[v^i] \cap C | \ge 2$, and it follows that $C$ separates any vertex in $G^1$ from any vertex in $G^2$. 
Note that $x_1^1$ separates $x_1^1$ and $x_2^1$, and $x_1^1$ separates $x_1^2$ from any other vertex of $G^2$. Next, for $j \in [\ell]$, $y_{2j-1}^1$ 
separates $y_{2j-1}^2$ from every other vertex of $G^2$. By definition of $A_{\ell}$, $N[y_{2i}^2] \cap B \ne N[y_{2j}^2] \cap B$ for $1 \le i <j \le \ell$. 
Since $N[x_2^2] \cap B = B$, $C$ separates $x_2^2$ from $y_{2i}^2$ for $i \in [ \ell]$. Similarly, $C$ separates any two vertices in $G^1$. 
Therefore, $C$ is an ID code of $G\Box K_2$, which implies $\gid(G\Box K_2) \le \gid(G)$.

\item[2.] Suppose $k \ge 2$. Let $T = \{ x_i^1: 1 \le i \le 2k-1\}$. We show that $C = A \cup B \cup T$ is an ID code of $G\Box K_2$. Figure~\ref{fig:construct2} 
depicts $C$ for $(A_2 \bowtie A_4)\Box K_2$. As in Case 1, $C$ separates any vertex in $G^1$ from any vertex in $G^2$. Note that for $i \in [ 2k-1]$, $x_i^1$ 
separates $x_i^2$ from any other vertex of $G^2$. Next, for $j \in [\ell]$, $y_{2j-1}^1$ separates $y_{2j-1}^2$ from every other vertex of $G^2$. By definition 
of $A_{\ell}$, $N[y_{2i}^2] \cap B \ne N[y_{2j}^2] \cap B$ for $1 \le i <j \le \ell$. Since $N[x_{2k}^2] \cap B = B$, $C$ separates $x_{2k}^2$ from $y_{2i}^2$ 
for $i \in [\ell]$. Similarly, $C$ separates any two vertices. Moreover, $C$ is an ID code of the subgraph induced by $\{x_i^1 : 1 \le i \le 2k\}$. For 
each $j  \in[\ell]$, $y_{2j}^2$ separates $y_{2j}^1$ from every other vertex in $G^1$. By definition $A_{\ell}$, 
$N[y_{2i-1}^1] \cap A \ne N[y_{2j-1}^1] \cap A$ for $1 \le i < j \le \ell$. Furthermore, this shows that $C$ separates $y_{2i-1}^1$ from $x_j^1$ 
where $i \in [ \ell]$ and $j \in [ 2k]$ since $N[x_j^1] \cap A = A$. Therefore, $C$ is an ID code of $G \Box K_2$, which implies 
$\gid(G \Box K_2) \le \gid(G)$.

\end{enumerate}
Finally, note that by Theorem~\ref{thm:lowerbound}, we know $\gid(G\Box K_2) \ge \gid(G)$, which implies $\gid(G \Box K_2) = \gid(G)$. This concludes the base cases.

Suppose now that $r \ge2$ and that if $G  = G_1 \bowtie \cdots \bowtie G_r$ where each $G_j \in \{A_i :i \in \mathbb{N}\}$, then $\gid(G\Box K_2) = \gid(G)$. 
Now consider $H = A_s \bowtie G_1 \bowtie \cdots \bowtie G_r$ where $s \ge 1$.  Let $G  = G_1 \bowtie \cdots \bowtie G_r$.  We can assume with no loss of 
generality that in the expansion $G = G_1 \bowtie \cdots \bowtie G_r$ that $|V(G_a)| \le |V(G_b)|$ when $a<b$. Thus, if $|V(A_s)|>|V(G_1)|$, we can let 
$H=G_1 \bowtie (A_s \bowtie G_2 \bowtie \cdots \bowtie G_s)$.

Suppose first that $H = A_1 \bowtie G$. Let $C$ be a minimum ID code for $G\Box K_2$ and let $x_1, x_2$ represent the vertices of $A_1$. We claim that 
$C' = C \cup \{x_1^1, x_2^1\}$ is an ID code for $H\Box K_2$. Clearly $C'$ dominates $H\Box K_2$, and any pair of vertices in $V(G\Box K_2)$ are separated 
by $C$, and therefore by $C'$. Suppose that $u, v \in \{x_1^1, x_2^1, x_1^2, x_2^2\}$. Note that if $u \in \{x_1^1, x_2^1\}$, then $I_{C'}(u) \cap G^1 \ne \emptyset$.
 Similarly, if $u \in \{x_1^2, x_2^2\}$, then $I_{C'}(u) \cap G^2 \ne \emptyset$. Thus, if $u \in \{x_1^1, x_2^1\}$ and $v \in \{x_1^2, x_2^2\}$, then 
 $I_{C'}(u) \ne I_{C'}(v)$. If $u = x_1^1$ and $v = x_2^1$, then $u \in I_{C'}(u)$ but $u \not\in I_{C'}(v)$. Similarly, if $u = x_1^2$ and $v= x_2^2$, 
 then $x_1^1 \in I_{C'}(u)$ but $x_1^1 \not\in I_{C'}(v)$. Finally, if $u \in \{x_1^1, x_2^1, x_1^2, x_2^2\}$ and $v \notin \{x_1^1, x_2^1, x_1^2, x_2^2\}$, 
 then one of $x_1^1$ or $x_2^1$ separates $u$ and $v$. Thus, $C'$ is an ID code of $H\Box K_2$ and by the inductive assumption and Theorem~\ref{thm:extremal}
$$|C'|=2 + |C|= 2+ \gid(G \Box K_2) = 2 + \gid(G)=|V(H)|-1=\gid(H)\,.$$
This implies that  $\gid(H \Box K_2) \le \gid(H)$.  An application of Theorem~\ref{thm:lowerbound} gives $\gid(H \Box K_2) = \gid(H)$.

Next, suppose that $H = A_i \bowtie G$ where $i \ge 2$. Let $C$ be a minimum ID code of $G \Box K_2$.
We claim that $C' = C \cup A_i^1$ is an ID code of $H \Box K_2$. Clearly $C'$ dominates $H \Box K_2$, and any pair of vertices in $V(G \Box K_2)$ 
are separated by $C$, and therefore by $C'$. Next, note that $A_i^1$ is an ID code of $A_i \Box K_2$. Thus, $C'$ separates every pair of vertices 
in $A_i^1 \cup A_i^2$. Finally, suppose that $u \in A_i^1 \cup A_i^2$ and $v \in G^1 \cup G^2$. If $u \in A_i^1$ and $v \in G^2$, then $u$ separates 
$u$ and $v$. Similarly, if $u \in A_i^2$ and $v \in G^2$, then some vertex of $A_i^1$ separates $u$ and $v$. So assume that $v \in G^1$. No vertex of 
$A_i^1 \cup A_i^2$ is adjacent to every vertex of $A_i^1$, but $A_i^1 \subset I_{C'}(v)$.  Hence $C'$ separates every pair of vertices in $H \Box K_2$, 
and consequently $C'$ is an ID code of $H \Box K_2$.  In a manner similar to that in the previous case, by using our induction assumption together 
with Theorems~\ref{thm:lowerbound} and  \ref{thm:extremal} we get that $\gid(H \Box K_2) = \gid(H)$.

Next, suppose that $G \in \mathcal{A} \bowtie K_1$. As above, we proceed by induction with base case $G = A_k \bowtie K_1$ where $k \in \mathbb{N}$. 
Note that $k \ge 2$ since the order of $G$ is at least $5$. If $k=2$, then we are done by Theorem~\ref{thm:gid4}. If $k>2$, then one can easily verify 
that $C = A \cup B$ where $A = \{x_{2j-1}^1 :  j \in  [k]\}$ and $B = \{x_{2j}^2 : j \in [ k]\}$ is an ID code for $G \Box K_2$. Thus, 
$\gid(G \Box K_2) \le |V(G)| - 1$ and by Theorem~\ref{thm:lowerbound}, we have $\gid(G\Box K_2) = \gid(G)$. We now assume that for some 
$m \ge 2$,  $\gid(G\Box K_2) = \gid(G)$ if
$G = G_1 \bowtie G_2 \bowtie \cdots \bowtie G_{m-1} \bowtie K_1$ where each $G_j \in \{A_i : i \in \mathbb{N}\}$.

 Suppose $H = G_1 \bowtie \cdots \bowtie G_m \bowtie K_1$ where $G_j \in \{A_i : i \in \mathbb{N}\}$.  Label the vertices 
 of $G_j = A_{t_j}$, $t_j \ge 1$, as $x_{j,1}, \dots, x_{j,2t_j}$ and let $y$ be the vertex of $K_1$. For each $j$ where $G_j = A_1$, let 
 $C_j = \{x_{j,1}^1, x_{j, 2}^1\}$ if $j$ is odd and let $C_j = \{x_{j,1}^2, x_{j,2}^2\}$ if $j$ is even. For each $G_j = A_{t_j}$ 
 where $t_j>1$, let  \[C_{j,1} = \{x_{j,2k-1}^1 :  k \in  [t_j]\},\]
 and
 \[C_{j,2} = \{x_{j,2k}^2 :  k \in  [t_j]\}.\]
Finally, let $C_j = C_{j,1} \cup C_{j,2}$. We show that $C = \cup_{j=1}^m C_j$ is an ID code for $G \Box K_2$.  Let $u$ and $v$ be
any two vertices in $V(G \Box K_2)$.  If $u\in G^1$ and $v \in G^2$, then $|I_C(u) \cap V(G^1)| \ge 2$ and $|I_C(v) \cap V(G^2)| \ge 2$,
which implies that $C$ separates $u$ and $v$. Now suppose that $u$ and $v$ belong to $G^1$.  If $u=y^1$, then $I_C(u)=C \cap V(G^1)\neq I_C(v)$,
which shows that $C$ separates $u$ and $v$.  If $u\not\in C$, then $u$ is adjacent to a codeword in $G^2$, and this implies that
$C$ separates $u$ and $v$.  If $u \in  \{x_{j,1}^1, x_{j, 2}^1\}$ for some $j$ such that $G_j=A_1$, say $u=x_{j,1}^1$, then
$x_{j, 2}^1$ separates $u$ and $v$.  If $u=x_{j,2k-1}^1$ for  $k \in [t_j]$, then there exists a codeword $d$ such that
$d\in G_j^1$ but $d \not\in I_C(u)$.  If $v$ does not belong to $G_j^1$, then $d$ separates $u$ and $v$.  If $v$ is in
$G_j^1$, the structure of $A_{t_j}$ shows that $C$ separates $u$ and $v$.  A similar argument shows that $C$ separates
$u$ and $v$ when both belong to $G^2$.  Hence $C$ is and ID code for $H \Box K_2$, and it follows that $\gid(H \Box K_2) \le |V(H)|-1=\gid(H)$.
By Theorem~\ref{thm:lowerbound} we now conclude that $\gid(H \Box K_2)=\gid(H)$.  By induction we have shown
 that if $G \in \mathcal{A} \cup (\mathcal{A} \bowtie K_1)$ has order at least $5$, then $\gid(G \Box K_2) = \gid(G)$.

Finally, notice that if we have $G \in \mathcal{A} \cup (\mathcal{A}\bowtie K_1)$ where $G = G_1 \bowtie G_2$ and $G_1, G_2 \not\in \{A_1, A_2\}$, then
\[\gid((G_1 \bowtie G_2) \Box K_2) = |V(G_1)| + |V(G_2)| -1 = \gid(G_1 \Box K_2) + \gid(G_2 \Box K_2) + 1.\]

\end{proof}
The next immediate question is whether or not the graphs given in the statement of Theorem~\ref{thm:lowerclass} are the only graphs which 
satisfy $\gid(G\Box K_2) = \gid(G)$. Unfortunately, there are an infinite number of graphs that are not contained in the class 
$\mathcal{A} \cup (\mathcal{A} \bowtie K_1)$ which satisfy $\gid(G\Box K_2) = \gid(G)$. For example, consider the graph $G$ obtained from 
$A_2 \bowtie A_2 \bowtie A_2$ as follows. Label the vertices of $A_2 = P_4$ as $u, v, x, y$ and let $u_i, v_i, x_i, y_i$ represent the 
vertices of the $i^{th}$ copy of $A_2$ for $i \in [3]$. To obtain $G$, let $w$ represent an additional vertex and add an edge between $w$ 
and $x_3$ and an edge between $w$ and $y_3$. Figure~\ref{fig:example} depicts the graph $G$ without the edges between vertices of $A_i$ and 
$A_j$ when $i \ne j$, $\{i,j\} \subset\{1, 2, 3\}$.
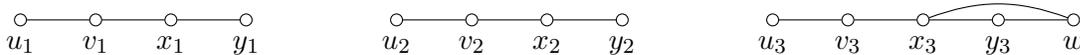
\begin{figure}[h!]
\begin{center}
\begin{tikzpicture}[auto]

	\vertex (1) at (0,0) [scale=.75pt,label=below:$u_1$]{};
	\vertex (2) at (1,0) [scale=.75pt,label=below:$v_1$]{};
	\vertex (3) at (2,0) [scale=.75pt,label=below:$x_1$]{};
	\vertex (4) at (3,0) [scale=.75pt,label=below:$y_1$]{};
	\vertex (5) at (5,0) [scale=.75pt,label=below:$u_2$]{};
	\vertex (6) at (6,0) [scale=.75pt,label=below:$v_2$]{};
	\vertex (7) at (7,0) [scale=.75pt,label=below:$x_2$]{};
	\vertex (8) at (8,0) [scale=.75pt,label=below:$y_2$]{};
	\vertex (9) at (10,0) [scale=.75pt,label=below:$u_3$]{};
	\vertex (10) at (11,0) [scale=.75pt,label=below:$v_3$]{};	
	\vertex (11) at (12,0) [scale=.75pt,label=below:$x_3$]{};
	\vertex (13) at (13,0) [scale=.75pt,label=below:$y_3$]{};
	\vertex (14) at (14,0) [scale=.75pt,label=below:$w$]{};
	\path
		(1) edge (2)
		(2) edge (3)
		(3) edge (4)
		(5) edge (6)
		(6) edge (7)
		(7) edge (8)
                  (9) edge (10)
                  (10) edge (11)
                  (11) edge (13)
                  (11) edge[bend left=20] (14)
                  (13) edge (14)
		;

\end{tikzpicture}
\caption{$G$ obtained from $A_2 \bowtie A_2 \bowtie A_2$}
\label{fig:example}
\end{center}
\end{figure}

We claim that $\gid(G \Box K_2) = 11 = \gid(G)$. First, note that $V(G) - \{u_3, w\}$ is an ID code of $G$. Next, we show that $\gid(G) \ge 11$. 
Let $C$ be a minimum ID code of $G$. If $w \not\in C$, then it is clear that $|C| \ge 11$ since $G[\cup_{i=1}^3 \{u_i, v_i, x_i, y_i\}]$ is isomorphic 
to $A_2 \bowtie A_2 \bowtie A_2$. So assume that $w \in C$. For each $i \in [3]$, $x_i \in C$ in order to separate $u_i$ and $v_i$. Similarly, $v_i \in C$ 
in order to separate $x_i$ and $y_i$. For $i \in [2]$, either $u_i \in C$ or $y_i \in C$ in order to separate $v_i$ and $x_i$ and, with no 
loss of generality, we may assume $u_i \in C$ for $i \in [2]$. Finally, notice that in order to separate $v_1, v_2$, and $v_3$, at least 
two vertices of $\{y_1, y_2, y_3\}$ are in $C$. In any case, we have shown, $\gid(G)\ge 11$. Furthermore, Theorem~\ref{thm:lowerbound} guarantees that 
$\gid(G\Box K_2) \ge 11$. On the other hand, notice that $G\Box K_2$ is illustrated in Figure~\ref{fig:prismexample} and the black vertices form an ID 
code of $G \Box K_2$. Thus, we have constructed a graph $G\not\in \mathcal{A} \cup (\mathcal{A}\bowtie K_1)$ where $\gid(G \Box K_2) = \gid(G)$. Moreover, 
any graph $G$ obtained from the join of $k$ copies of $A_2$ by appending an additional vertex $w$ in the same way as above will satisfy $\gid(G\Box K_2) = \gid(G)$.
\begin{figure}[h!]
\begin{center}
\begin{tikzpicture}[auto]

	\vertex (1) at (0,0) [scale=.75pt,fill=black]{};
	\vertex (2) at (1,0) [scale=.75pt,fill=black]{};
	\vertex (3) at (2,0) [scale=.75pt,fill=black]{};
	\vertex (4) at (3,0) [scale=.75pt]{};
	\vertex (5) at (5,0) [scale=.75pt,fill=black]{};
	\vertex (6) at (6,0) [scale=.75pt]{};
	\vertex (7) at (7,0) [scale=.75pt,fill=black]{};
	\vertex (8) at (8,0) [scale=.75pt]{};
	\vertex (9) at (10,0) [scale=.75pt,fill=black]{};
	\vertex (10) at (11,0) [scale=.75pt]{};	
	\vertex (11) at (12,0) [scale=.75pt,fill=black]{};
	\vertex (13) at (13,0) [scale=.75pt]{};
	\vertex (14) at (14,0) [scale=.75pt]{};
	\vertex (1a) at (0,1) [scale=.75pt]{};
	\vertex (2a) at (1,1) [scale=.75pt]{};
	\vertex (3a) at (2,1) [scale=.75pt]{};
	\vertex (4a) at (3,1) [scale=.75pt]{};
	\vertex (5a) at (5,1) [scale=.75pt]{};
	\vertex (6a) at (6,1) [scale=.75pt,fill=black]{};
	\vertex (7a) at (7,1) [scale=.75pt]{};
	\vertex (8a) at (8,1) [scale=.75pt,fill=black]{};
	\vertex (9a) at (10,1) [scale=.75pt]{};
	\vertex (10a) at (11,1) [scale=.75pt,fill=black]{};	
	\vertex (11a) at (12,1) [scale=.75pt]{};
	\vertex (13a) at (13,1) [scale=.75pt,fill=black]{};
	\vertex (14a) at (14,1) [scale=.75pt]{};
	\path
		(1) edge (2)
		(2) edge (3)
		(3) edge (4)
		(5) edge (6)
		(6) edge (7)
		(7) edge (8)
                  (9) edge (10)
                  (10) edge (11)
                  (11) edge (13)
                  (11) edge[bend left=20] (14)
                  (13) edge (14)
                  (1a) edge (2a)
		(2a) edge (3a)
		(3a) edge (4a)
		(5a) edge (6a)
		(6a) edge (7a)
		(7a) edge (8a)
                  (9a) edge (10a)
                  (10a) edge (11a)
                  (11a) edge (13a)
                  (11a) edge[bend left=20] (14a)
                  (13a) edge (14a)
                  (1) edge (1a)
                  (2) edge (2a)
                  (3) edge (3a)
                  (4) edge (4a)
                  (5) edge (5a)
                  (6) edge (6a)
                  (7) edge (7a)
                  (8) edge (8a)
                  (9) edge (9a)
                  (10) edge (10a)
                  (11) edge (11a)
                  (13) edge (13a)
                  (14) edge (14a)
		;

\end{tikzpicture}
\caption{ID code of $G \Box K_2$}
\label{fig:prismexample}
\end{center}
\end{figure}

\section{Grid graphs and general upper bounds}\label{grids}
We now give upper bounds for the ID code number of $G \Box P_m$ where $G$ is any graph and $m\ge 2$. First, we consider when $G$ is a path.
\begin{theorem}\label{thm:gridupper}
For any positive integers $m$ and $k$ where $m \le 3k$,
\[\gid(P_m \Box P_{3k}) \le mk + k\left\lceil \frac{m}{3}\right\rceil,\]
\[\gid(P_m \Box P_{3k+1}) \le mk + k \left\lceil\frac{m}{3}\right\rceil + \left\lceil \frac{m}{2}\right\rceil,\]
\[\gid(P_m \Box P_{3k+2}) \le m(k+1) + (k-1)\left\lceil \frac{m}{3}\right\rceil.\]
\end{theorem}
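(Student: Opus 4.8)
The plan is to prove all three inequalities constructively: for each residue of $n$ modulo $3$ I build an explicit set $C\subseteq V(P_m\Box P_n)$ of the claimed size and check that it is an identifying code. Throughout write a vertex as $(i,j)$ with $i\in[m]$ indexing the ``short'' direction and $j\in[n]$ indexing the ``long'' direction, and call $P_m\times\{j\}$ the $j$-th \emph{column}. The building block is a pattern on three consecutive columns: one column is taken entirely into $C$ (a \emph{full} column, contributing the $mk$ or $m(k+1)$ term), a second column adjacent to it is left empty, and in the third column I place $\lceil m/3\rceil$ codewords spaced three rows apart (a \emph{sparse} column, contributing the $\lceil m/3\rceil$ terms), with the three column types arranged so that every empty column is flanked by a full column.

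For $n=3k$ this pattern is tiled $k$ times, giving $mk+k\lceil m/3\rceil$ codewords. For $n=3k+1$ I keep that pattern on the first $3k$ columns and treat the leftover column $3k+1$ --- a ``dead end'' with only one neighbouring column --- by adding $\lceil m/2\rceil$ codewords in it, roughly in every other row; this is forced, since otherwise a vertex of column $3k+1$ cannot be separated from the vertex two columns away that shares its only dominator, and it accounts for the additive $\lceil m/2\rceil$. For $n=3k+2$ I reorganize the tiling so that the $k+1$ full columns occupy the positions congruent to $2$ modulo $3$; then both end columns are handled automatically because each is adjacent to a full column, and only $k-1$ of the interior columns need the sparse pattern, which yields $m(k+1)+(k-1)\lceil m/3\rceil$. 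In each case, counting the codewords is routine bookkeeping once the pattern is fixed.

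The verification has two parts. Domination is immediate: a vertex in a full column is a codeword, a vertex in an empty column is adjacent to the full column next to it, and a vertex in a sparse column either lies vertically within distance $1$ of one of its codewords --- here the choice of spacing $3$ and the ceilings enter, together with care at row $1$ and row $m$ --- or is adjacent to the full column. Separation I argue for a pair $u=(i,j)$, $v=(i',j')$ by cases on $|j-j'|$: if $|j-j'|\ge 3$ the closed neighbourhoods of $u$ and $v$ are disjoint, so $I_C(u)\ne I_C(v)$ because $C$ dominates; if $j=j'$ the two vertices are separated by differing codewords among their vertical neighbours, using the adjacent full column when the shared column is empty or sparse; and if $|j-j'|=1$ a short argument, using $m\ge 2$, shows that whichever of $u,v$ is closer to a full column has a codeword in its identifying set that the other lacks. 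The genuinely delicate case --- and the step I expect to be the main obstacle --- is $|j-j'|=2$: then $N[u]\cap N[v]$ can contain only the single vertex of the column strictly between $u$ and $v$, so if $u$ and $v$ lie in the same row and that vertex is a codeword, one of $u,v$ must still have another codeword neighbour. Arranging this everywhere is exactly what dictates where the sparse codewords go and forces the special handling of the first and last rows (where a vertical neighbour is missing) and of the first and last columns; once the construction is checked to survive this case, the count completes the proof.
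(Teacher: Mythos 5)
Your constructions for $P_m \Box P_{3k}$ and $P_m \Box P_{3k+1}$ are essentially the paper's: a period-three column pattern (full column, empty column, sparse column with codewords every third row), plus an extra roughly-half column at the leftover end when $n\equiv 1\pmod 3$, and the boundary care you flag at rows $1$ and $m$ is exactly the adjustment the paper makes when $m\equiv 1\pmod 3$. Those two cases are fine in approach, modulo the verification you defer.

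The third case contains a genuine gap. If the $k+1$ full columns sit at the positions congruent to $2\pmod 3$, they are evenly spaced at distance $3$, so the non-full columns split into $k$ \emph{disjoint} pairs $(c-1,c+1)$ straddling a full column $c$ (namely $(1,3),(4,6),\dots,(3k-2,3k)$ in your indexing). For a vertex $(i,c-1)$ and its mate $(i,c+1)$ the only full-column codeword in either closed neighborhood is $(i,c)$, and a sparse column only separates such a pair for every row $i$ if it is one of the two columns of that pair (a sparse codeword in a \emph{different} column lies in $N[(i,c\pm1)]$ only when its row equals $i$, which fails for two-thirds of the rows). Hence each of the $k$ pairs needs its own sparse column, and your budget of $k-1$ sparse columns leaves some pair with $I_C(i,c-1)=\{(i,c)\}=I_C(i,c+1)$ for most rows $i$; the set you describe is not an identifying code, and repairing it this way costs $m(k+1)+k\lceil m/3\rceil$, which overshoots the claimed bound by $\lceil m/3\rceil$. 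The saving of one sparse column requires breaking the even spacing: the paper places the last two full columns at distance $2$, so the final columns read full, empty, full, empty. The empty column wedged between two full columns is separated from both of its distance-two mates by the second full column it sees, which disposes of the two end pairs with no sparse column at all and is precisely what makes $k-1$ sparse columns suffice. Your claim that ``both end columns are handled automatically because each is adjacent to a full column'' only addresses domination, not this distance-two separation, so the third inequality is not established as written.
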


\begin{proof}
First, suppose $m \not\equiv 1 \pmod{3}$. We construct ID codes for each of the above cases. Let $\{0, 1, \dots, m-1\}$ represent the vertices of $P_m$ 
and let $\{0, 1, \dots, y\}$ represent the vertices of $P_{3k+a}$ for $a \in \{0, 1, 2\}$. Define
\[A = \{(i,j) : 0 \le i \le m-1, j \equiv 1 \pmod{3}\}\]
and
\[B = \{(i,j) : i \equiv 1 \pmod{3},  j \equiv 2 \pmod{3}\}.\]

Figure~\ref{fig:0mod3} (a) depicts the set $A \cup B$ for $k=2$ and $a=0$. One can easily verify that $A \cup B$ is an ID code for $P_m \Box P_{3k}$. Next, consider 
$P_m \Box P_{3k+1}$. Let
\[C = \{(i,j) : i \equiv 1 \pmod{2}, j = y\}.\]
Figure~\ref{fig:0mod3} (b) depicts the set $A \cup B\cup C$ for $k=2$ and $a=1$.  Again, it is straightforward to verify that $A \cup B \cup C$ is an ID code for $P_m \Box P_{3k+1}$.
Finally, consider $P_m \Box P_{3k+2}$. Define
\[X = \{(i,j) : 0 \le i \le m-1, j \equiv 1 \pmod{3}, j \ne y\},\]
\[Y = \{(i,j) : i \equiv 1 \pmod{3}, j \equiv 2 \pmod{3}, j \ne y-2\},\]
and
\[Z = \{(i,j) : 0 \le i \le m-1, j = y-1\}.\]

The set $X \cup Y\cup Z$ is an ID code for $P_m \Box P_{3k+2}$.

Now, suppose that $m \equiv 1 \pmod{3}$ and let
\[B' = B \cup \{(i,j) : i = m-1, j \equiv 2\pmod{3}\}\]
and
\[Y' =  Y \cup \{(i,j) : i = m-1, j\equiv 2\pmod{3}, j \ne y-2\}.\]
One can easily verify that $A \cup B'$ is an ID code for $P_m \Box P_{3k}$, $A \cup B' \cup C$ is an ID code of $P_m \Box P_{3k+1}$, and $X \cup Y' \cup Z$ 
is an ID code of $P_m \Box P_{3k+2}$.
\end{proof}
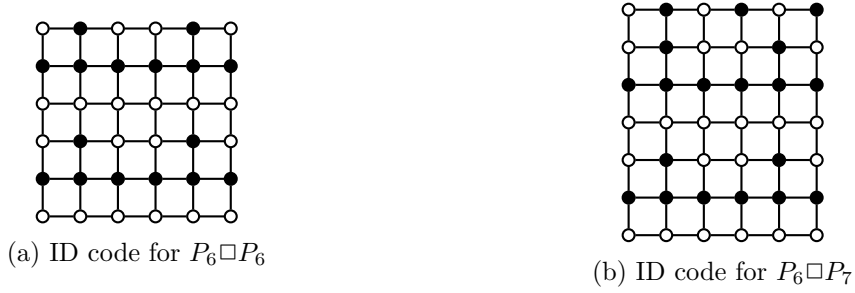
\begin{figure}[ht!]
\begin{subfigure}[]{.5\textwidth}
\centering
\begin{tikzpicture}[scale=1.0,style=thick]
	\vertex (00) at (0,0) [scale=.75pt]{};
	\vertex (01) at (0,0.5) [scale=.75pt,fill=black]{};
	\vertex (02) at (0,1) [scale=.75pt]{};
	\vertex (03) at (0,1.5) [scale=.75pt]{};
	\vertex (04) at (0,2) [ scale=.75pt,fill=black]{};
	\vertex (05) at (0,2.5) [scale=.75pt]{};
	\vertex (10) at (0.5, 0) [scale=.75pt]{};
	\vertex (11) at (0.5, .5) [scale=.75pt,fill=black]{};
	\vertex (12) at (0.5,1) [scale=.75pt,fill=black]{};
	\vertex (13) at (0.5,1.5) [scale=.75pt]{};
	\vertex (14) at (0.5,2)[scale=.75pt,fill=black]{};
	\vertex (15) at (0.5,2.5)[scale=.75pt,fill=black]{};
	\vertex (20) at (1,0)[scale=.75pt]{};
	
	\vertex (21) at (1,.5) [scale=.75pt,fill=black]{};
	\vertex (22) at (1,1) [scale=.75pt]{};
	\vertex (23) at (1,1.5) [scale=.75pt]{};
	\vertex (24) at (1,2) [scale=.75pt,fill=black]{};
	\vertex (25) at (1,2.5) [scale=.75pt]{};
	\vertex (30) at (1.5,0) [scale=.75pt]{};
	\vertex (31) at (1.5,.5) [scale=.75pt,fill=black]{};
	\vertex (32) at (1.5,1)[scale=.75pt]{};
	\vertex (33) at (1.5,1.5)[scale=.75pt]{};
	\vertex (34) at (1.5,2)[scale=.75pt,fill=black]{};
	\vertex (35) at (1.5,2.5)[scale=.75pt]{};
	\vertex (40) at (2,0)[scale=.75pt]{};
	\vertex (41) at (2,.5)[scale=.75pt,fill=black]{};
	\vertex (42) at (2,1)[scale=.75pt,fill=black]{};
	\vertex (43) at (2,1.5)[scale=.75pt]{};
	\vertex (44) at (2,2)[scale=.75pt,fill=black]{};
	\vertex (45) at (2,2.5) [scale=.75pt,fill=black]{};
	\vertex (50) at (2.5,0) [scale=.75pt]{};
	\vertex (51) at (2.5,.5) [scale=.75pt,fill=black]{};
	\vertex (52) at (2.5,1)[scale=.75pt]{};
	\vertex (53) at (2.5,1.5)[scale=.75pt]{};
	\vertex (54) at (2.5,2)[scale=.75pt,fill=black]{};
	\vertex (55) at (2.5,2.5)[scale=.75pt]{};

	\path
		(00) edge (01)
		(01) edge (02)
		(02) edge (03)
		(03) edge (04)
		(04) edge (05)
		(00) edge (10)
		(01) edge (11)
		(02) edge (12)
		(03) edge (13)
		(04) edge (14)
		(05) edge (15)
		(10) edge (11)
		(11) edge (12)
		(12) edge (13)
		(13) edge (14)
		(14) edge (15)
		(10) edge (20)
		(11) edge (21)
		(12) edge (22)
		(13) edge (23)
		(14) edge (24)
		(15) edge (25)
		(20) edge (21)
		(21) edge (22)
		(22) edge (23)
		(23) edge (24)
		(24) edge (25)
		(20) edge (30)
		(21) edge (31)
		(22) edge (32)
		(24) edge (34)
		(23) edge (33)
		(25) edge (35)
		(30) edge (31)
		(31) edge (32)
		(32) edge (33)
		(33) edge (34)
		(34) edge (35)
		(30) edge (40)
		(31) edge (41)
		(32) edge (42)
		(33) edge (43)
		(34) edge (44)
		(35) edge (45)
		(40) edge (41)
		(41) edge (42)
		(42) edge (43)
		(43) edge (44)
		(44) edge (45)
		(40) edge (50)
		(41) edge (51)
		(42) edge (52)
		(43) edge (53)
		(44) edge (54)
		(45) edge (55)
		(50) edge (51)
		(51) edge (52)
		(52) edge (53)
		(53) edge (54)
		(54) edge (55);
\end{tikzpicture}
\caption{ID code for $P_6 \Box P_6$}
\end{subfigure}~
\begin{subfigure}[]{.5\textwidth}
\centering
\begin{tikzpicture}[scale=1.0,style=thick]
	\vertex (00) at (0,0) [scale=.75pt]{};
	\vertex (01) at (0,0.5) [scale=.75pt,fill=black]{};
	\vertex (02) at (0,1) [scale=.75pt]{};
	\vertex (03) at (0,1.5) [scale=.75pt]{};
	\vertex (04) at (0,2) [ scale=.75pt,fill=black]{};
	\vertex (05) at (0,2.5) [scale=.75pt]{};
	\vertex (06) at (0,3) [scale=.75pt]{};
	\vertex (10) at (0.5, 0) [scale=.75pt]{};
	\vertex (11) at (0.5, .5) [scale=.75pt,fill=black]{};
	\vertex (12) at (0.5,1) [scale=.75pt,fill=black]{};
	\vertex (13) at (0.5,1.5) [scale=.75pt]{};
	\vertex (14) at (0.5,2)[scale=.75pt,fill=black]{};
	\vertex (15) at (0.5,2.5)[scale=.75pt,fill=black]{};
	\vertex (16) at (0.5,3) [scale=.75pt,fill=black]{};
	\vertex (20) at (1,0)[scale=.75pt]{};
	
	\vertex (21) at (1,.5) [scale=.75pt,fill=black]{};
	\vertex (22) at (1,1) [scale=.75pt]{};
	\vertex (23) at (1,1.5) [scale=.75pt]{};
	\vertex (24) at (1,2) [scale=.75pt,fill=black]{};
	\vertex (25) at (1,2.5) [scale=.75pt]{};
	\vertex (26) at (1,3) [scale=.75pt]{};
	\vertex (30) at (1.5,0) [scale=.75pt]{};
	\vertex (31) at (1.5,.5) [scale=.75pt,fill=black]{};
	\vertex (32) at (1.5,1)[scale=.75pt]{};
	\vertex (33) at (1.5,1.5)[scale=.75pt]{};
	\vertex (34) at (1.5,2)[scale=.75pt,fill=black]{};
	\vertex (35) at (1.5,2.5)[scale=.75pt]{};
	\vertex (36) at (1.5,3) [scale=.75pt,fill=black]{};
	\vertex (40) at (2,0)[scale=.75pt]{};
	\vertex (41) at (2,.5)[scale=.75pt,fill=black]{};
	\vertex (42) at (2,1)[scale=.75pt,fill=black]{};
	\vertex (43) at (2,1.5)[scale=.75pt]{};
	\vertex (44) at (2,2)[scale=.75pt,fill=black]{};
	\vertex (45) at (2,2.5) [scale=.75pt,fill=black]{};
	\vertex (46) at (2,3) [scale=.75pt]{};
	\vertex (50) at (2.5,0) [scale=.75pt]{};
	\vertex (51) at (2.5,.5) [scale=.75pt,fill=black]{};
	\vertex (52) at (2.5,1)[scale=.75pt]{};
	\vertex (53) at (2.5,1.5)[scale=.75pt]{};
	\vertex (54) at (2.5,2)[scale=.75pt,fill=black]{};
	\vertex (55) at (2.5,2.5)[scale=.75pt]{};
	\vertex (56) at (2.5,3)[scale=.75pt,fill=black]{};

	\path
		(00) edge (01)
		(01) edge (02)
		(02) edge (03)
		(03) edge (04)
		(04) edge (05)
		(00) edge (10)
		(01) edge (11)
		(02) edge (12)
		(03) edge (13)
		(04) edge (14)
		(05) edge (15)
		(10) edge (11)
		(11) edge (12)
		(12) edge (13)
		(13) edge (14)
		(14) edge (15)
		(10) edge (20)
		(11) edge (21)
		(12) edge (22)
		(13) edge (23)
		(14) edge (24)
		(15) edge (25)
		(20) edge (21)
		(21) edge (22)
		(22) edge (23)
		(23) edge (24)
		(24) edge (25)
		(20) edge (30)
		(21) edge (31)
		(22) edge (32)
		(24) edge (34)
		(23) edge (33)
		(25) edge (35)
		(30) edge (31)
		(31) edge (32)
		(32) edge (33)
		(33) edge (34)
		(34) edge (35)
		(30) edge (40)
		(31) edge (41)
		(32) edge (42)
		(33) edge (43)
		(34) edge (44)
		(35) edge (45)
		(40) edge (41)
		(41) edge (42)
		(42) edge (43)
		(43) edge (44)
		(44) edge (45)
		(40) edge (50)
		(41) edge (51)
		(42) edge (52)
		(43) edge (53)
		(44) edge (54)
		(45) edge (55)
		(50) edge (51)
		(51) edge (52)
		(52) edge (53)
		(53) edge (54)
		(54) edge (55)
		(05) edge (06)
		(15) edge (16)
		(25) edge (26)
		(35) edge (36)
		(45) edge (46)
		(55) edge (56)
		(06) edge (16)
		(16) edge (26)
		(26) edge (36)
		(36) edge (46)
		(46) edge (56);
\end{tikzpicture}
\caption{ID code for $P_6 \Box P_7$}
\end{subfigure}
\caption{Examples of ID codes when $m\not\equiv 1\pmod{3}$}
\label{fig:0mod3}
\end{figure}

\begin{theorem}\label{thm:gridlower}
For positive integers $m$ and $n$ where $2 \le m \le n$, $\gid(P_m \Box P_n) \ge mn/3$.
\end{theorem}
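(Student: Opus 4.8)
The plan is to prove this lower bound by a \emph{share} (double-counting) argument that uses nothing about the grid beyond its maximum degree. Fix a minimum identifying code $C$ of $P_m\Box P_n$. For each codeword $c\in C$ I would define its share
\[
\mathrm{sh}(c)=\sum_{v\in N[c]}\frac{1}{|I_C(v)|}\,,
\]
which is well defined because $C$ dominates, so $I_C(v)\ne\emptyset$ for every $v$. The first step is the global identity: since for a fixed vertex $v$ the codewords $c$ with $v\in N[c]$ are exactly those with $c\in I_C(v)$, interchanging the order of summation gives
\[
\sum_{c\in C}\mathrm{sh}(c)=\sum_{v\in V(P_m\Box P_n)}\ \sum_{c\in I_C(v)}\frac{1}{|I_C(v)|}=\sum_{v\in V(P_m\Box P_n)}1=mn\,.
\]
Thus the theorem reduces to the local claim that $\mathrm{sh}(c)\le 3$ for every $c\in C$, since that claim yields $3|C|\ge\sum_{c\in C}\mathrm{sh}(c)=mn$.

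The second step is to establish the local claim. Since $\Delta(P_m\Box P_n)\le 4$, we have $|N[c]|\le 5$. The one observation with any content is that at most one vertex of $N[c]$ can have $|I_C(v)|=1$: for $v\in N[c]$ we have $c\in N[v]\cap C=I_C(v)$, so $|I_C(v)|=1$ forces $I_C(v)=\{c\}$; if two distinct vertices $v,w\in N[c]$ both had this property, then $I_C(v)=\{c\}=I_C(w)$, contradicting that $C$ separates $v$ and $w$. Hence every vertex of $N[c]$ except possibly one contributes at most $\tfrac12$ to $\mathrm{sh}(c)$, so
\[
\mathrm{sh}(c)\le 1+\bigl(|N[c]|-1\bigr)\cdot\tfrac12\le 1+4\cdot\tfrac12=3\,,
\]
while if no vertex of $N[c]$ has $|I_C(v)|=1$ then $\mathrm{sh}(c)\le|N[c]|/2\le\tfrac52<3$. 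In either case $\mathrm{sh}(c)\le 3$, and combining with the global identity gives $\gid(P_m\Box P_n)=|C|\ge mn/3$.

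I do not expect a real obstacle here: the only nonroutine point is the observation that a single codeword can privately identify at most one vertex of its closed neighborhood, and the rest is the standard bookkeeping of shares together with the degree bound $\Delta(P_m\Box P_n)\le 4$. In particular the path structure is used only through that degree bound, so the same proof would give $\gid(G)\ge |V(G)|/3$ for any subgraph $G$ of the infinite grid, and more generally $\gid(G)\ge |V(G)|/(\Delta(G)+1-\tfrac12(\Delta(G)))$-type bounds by the identical argument.
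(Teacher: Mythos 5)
Your proof is correct, and it takes a genuinely different route from the paper. You use the classical \emph{share} (discharging) argument: the exchange-of-summation identity $\sum_{c\in C}\mathrm{sh}(c)=mn$ is valid because $C$ dominates, and your local bound $\mathrm{sh}(c)\le 3$ is sound — the only nontrivial point, that at most one $v\in N[c]$ can have $I_C(v)=\{c\}$, follows exactly as you say from the separation property, and then $|N[c]|\le 5$ gives $\mathrm{sh}(c)\le 1+4\cdot\tfrac12=3$, hence $3|C|\ge mn$. The paper instead argues globally: it splits $C$ into the codewords that are isolated in $G[C]$ and the rest, refines by vertex degree, and double-counts the edges between $C$ and $V(G)\setminus C$, using the observation that a non-codeword with a unique codeword neighbor cannot be adjacent to an isolated codeword (so $|N_1|\le|C_2|$); a chain of inequalities then yields $6|C|\ge 2mn$. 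Both proofs ultimately exploit the same obstruction — a codeword can be the sole identifier of at most one vertex of its closed neighborhood — but your bookkeeping is local rather than global, which makes the argument shorter and visibly independent of the grid structure beyond $\Delta\le 4$: as you note, it gives $\gid(G)\ge |V(G)|/3$ for any twin-free subgraph of the grid and, in general, $\gid(G)\ge 2|V(G)|/(\Delta(G)+2)$, a bound in the spirit of the density lower bounds used for infinite grids. The paper's edge-counting version is more tailored but proves nothing stronger here, so your approach is a legitimate (arguably cleaner) alternative.
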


\begin{proof} Let $C$ be a minimum ID code of $G = P_m \Box P_n$. Partition $V(G)$ as follows. Let

\[C_1 = \{v \in V(G) : v \in C, v \text{ isolated in } G[C]\},\]
\[C_2 = C - C_1,\]
and for each $i \in [4]$,
\[N_i = \{v \in V(G)- C : v \text{ is adjacent to $i$ vertices in $C$}\}.\]

We further partition $C_1$ and $C_2$ as follows. For $i \in[3]$, let $A_i = \{v \in C_1 : \deg(v) = i+1\}$ and let 
$B_i = \{v \in C_2 : \deg(v) = i+1\}$. Note that the number of edges between $C$ and $V(G)-C$ is at most $2|A_1| + 3|A_2| + 4|A_3| + |B_1| + 2|B_2| + 3|B_3|$.
On the other hand, the number of edges between $C$ and $V(G) - C$ is precisely $|N_1| + 2|N_2| + 3|N_3| + 4|N_4|$. Thus,
\begin{eqnarray*}
|C| + |A_1| + 2|A_2| + 3|A_3| + |B_2| + 2|B_3|&=& 2|A_1| + 3|A_2| + 4|A_3| + |B_1| + 2|B_2| + 3|B_3|\\
&\ge& |N_1| + 2|N_2| + 3|N_3| + 4|N_4|\\
&=& mn - |C| + |N_2| + 2|N_3| + 3|N_4|.
\end{eqnarray*}
Therefore,
\[2|C| + |A_1| + 2|A_2| + 3|A_3| + |B_2| + 2|B_3| \ge mn + |N_2| + 2|N_3| + 3|N_4|.\]
Next, notice that
\[|A_1| + 2|A_2| + 3|A_3| + |B_2| + 2|B_3| \le 3|C_1| + 2|C_2| = 3|C|-|C_2|,\]
which implies that
\[2|C| + 3|C| - |C_2| \ge mn + |N_2| + 2|N_3| + 3|N_4|.\]

On the other hand, no vertex of $N_1$ is adjacent to a vertex of $C_1$ for otherwise $C$ would not separate such a pair of vertices. 
Thus, each vertex of $N_1$ is adjacent to precisely one vertex of $C_2$. Moreover, there can exist no more than $|C_2|$ vertices in $N_1$. 
Therefore, we may conclude that
\begin{eqnarray*}
5|C| &\ge& mn + |N_2| + 2|N_3| + 3|N_4| + |C_2| \\
&\ge& mn + |N_2| + 2|N_3| + 3|N_4| + |N_1|\\
&\ge& mn +mn - |C| + |N_3| + 2|N_4|\\
&\ge& 2mn - |C|.
\end{eqnarray*}
It follows that $|C| \ge mn/3$.
\end{proof}

Note that when $n = 3k$ for some $k \in \mathbb{N}$, it follows from  Theorem~\ref{thm:gridlower}  that $\gid(P_m \Box P_{3k}) \ge mk$. Thus the 
gap between Theorem~\ref{thm:gridupper} and Theorem~\ref{thm:gridlower} is $mk/3$. Now we provide a general upper bound for $\gid(G\Box P_m)$ whenever $m\ge 3$ and $G$ is twin-free.

\begin{theorem}\label{thm:genupper}
For any twin-free graph $G$ of order $n$ and any positive integer $m\ge3$,
\[\gid(G\Box P_m) \le \min\{m\gid(G), m\gamma(G) + \left\lceil \frac{m}{3}\right\rceil(n - \gamma(G))\}.\]
\end{theorem}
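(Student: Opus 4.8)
My plan is to establish the two upper bounds $m\gid(G)$ and $m\gamma(G)+\lceil m/3\rceil(n-\gamma(G))$ separately, in each case by exhibiting an explicit identifying code of $G\Box P_m$ of that size; the minimum of the two then bounds $\gid(G\Box P_m)$. Throughout I will identify $V(P_m)$ with $[m]$ and exploit the decomposition $N[(u,j)]=(N_G[u]\times\{j\})\cup(\{u\}\times N_{P_m}[j])$ in $G\Box P_m$: thus for any code $C$, $I_C(u,j)$ is the union of its ``fiber part'' (codewords in the fiber $G^j$ at $G$-positions in $N_G[u]$) and its ``column part'' (codewords in $\{u\}\times[m]$ at $P_m$-positions in $N_{P_m}[j]$). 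I will also use the monotonicity principle that enlarging a code never destroys a separation: if $C\subseteq C'$ and $z\in I_C(x)\triangle I_C(y)$, then $z\in C'$ and $z\in N[x]\triangle N[y]$, so $C'$ still separates $x$ and $y$.

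For the first bound I will take a minimum identifying code $I$ of $G$ and show that $C=I\times[m]$ is an identifying code of $G\Box P_m$; it has size $m\gid(G)$ and dominates since $I$ dominates $G$. To check separation of distinct $(u,j),(u',j')$ I would split into cases according to whether the $G$-coordinates agree, the $P_m$-coordinates agree, or neither. If $u=u'$ (so $j\ne j'$): when $u\in I$ the column part forces $\{t:(u,t)\in I_C(u,j)\}=N_{P_m}[j]\ne N_{P_m}[j']$, using twin-freeness of $P_m$ for $m\ge3$; when $u\notin I$ the sets $I_C(u,j),I_C(u,j')$ are nonempty and supported at distinct $P_m$-positions. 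If $j=j'$ (so $u\ne u'$): a codeword witnessing that $I$ separates $u$ and $u'$ in $G$ gives a codeword of $G^j$ lying in exactly one of the two neighborhoods, and if none serves then one of $u,u'$ — say $u$ — lies in $I$ and a codeword $(u,j\pm1)$ separates. If neither coordinate agrees, $I_C(u,j)$ and $I_C(u',j')$ differ either by sitting at different $P_m$-positions or by one containing a codeword at a $G$-position the other lacks.

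For the second bound I will take a minimum dominating set $D$ of $G$, put $X=V(G)\setminus D$, take a minimum dominating set $S$ of $P_m$ (so $|S|=\gamma(P_m)=\lceil m/3\rceil$), and consider $C=(D\times[m])\cup(X\times S)$, of size $m\gamma(G)+\lceil m/3\rceil(n-\gamma(G))$. Since $D$ dominates $G$, the part $D\times[m]$ already dominates $G\Box P_m$. Running the same case analysis on $D\times[m]$ — using that every $G$-position in $D$ carries a full column of codewords — I would show that the only pairs it fails to separate are pairs $(u,j),(u',j)$ with $u,u'\in X$ distinct, in a common fiber, satisfying $N_G[u]\cap D=N_G[u']\cap D$. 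For such a pair, since $S$ dominates $P_m$ there is $t\in S$ with $|t-j|\le1$, and then $(u,t)\in C$ lies in $N[(u,j)]\setminus N[(u',j)]$ and separates them; by the monotonicity principle every pair already separated by $D\times[m]$ remains separated. Hence $C$ is an identifying code of the required size.

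The step I expect to be the main obstacle is the case analysis verifying separation, and in particular the second step's task of pinning down exactly which pairs survive unseparated under $D\times[m]$: one must track how the column contributions $\{u\}\times N_{P_m}[j]$ (present precisely when $u\in D$) combine with the fiber contributions $(N_G[u]\cap D)\times\{j\}$ and check these unions cannot coincide for vertices differing in a $G$- or $P_m$-coordinate. This is exactly where $m\ge3$ is used, through the twin-freeness of $P_m$; indeed $I\times[2]$ need not be an identifying code of $G\Box K_2$, consistent with the hypothesis of Theorem~\ref{thm:twicegamma}. The remaining ingredients are routine: $\gamma(P_m)=\lceil m/3\rceil$, and the fact that enlarging a code preserves all separations.
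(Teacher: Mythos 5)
Your overall plan is the same as the paper's: both bounds come from explicit codes, namely $I\times V(P_m)$ for the first and $(D\times V(P_m))\cup(X\times S)$ for the second; the paper simply takes $S$ to be the set of positions congruent to $1\pmod{3}$ (with the last column added when $m\equiv 1\pmod 3$) rather than an arbitrary minimum dominating set of $P_m$, and it omits the verification you sketch. However, one step of your verification fails as written. For a residual pair $(u,j),(u',j)$ with $u,u'\in X$ and $N_G[u]\cap D=N_G[u']\cap D$, your separating witness is $(u,t)$ with $t\in S$ and $|t-j|\le 1$. If $j\in S$ and $j$ is the only element of $S$ in $N_{P_m}[j]$ (the typical situation for a minimum dominating set of a path, e.g.\ $S=\{1,4,7,\dots\}$), then necessarily $t=j$; and if in addition $u$ and $u'$ are adjacent in $G$, the codeword $(u,j)$ lies in $N[(u',j)]$ as well, so it separates nothing. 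Such pairs genuinely occur: adjacent non-twin vertices outside $D$ can have the same trace on $D$ (take, say, a dominating vertex $c$ together with adjacent vertices $x,y$ where $x$ has an extra neighbor).

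The construction is nevertheless correct, but the repair uses exactly the hypothesis you never invoke for this bound, namely that $G$ is twin-free. When $j\in S$, every vertex of the fiber $G^j$ is a codeword, so $I_C(u,j)\supseteq N_G[u]\times\{j\}$ and $I_C(u',j)\supseteq N_G[u']\times\{j\}$; since $N_G[u]\ne N_G[u']$, pick $w\in N_G[u]\bigtriangleup N_G[u']$, say $w\in N_G[u]\setminus N_G[u']$, and then $(w,j)$ is a codeword in $N[(u,j)]$ but not in $N[(u',j)]$ (it cannot enter via a column contribution, since that would force $w=u'$). When $j\notin S$ your argument stands, because domination of $P_m$ by $S$ gives $t\in S$ adjacent to $j$ with $t\ne j$, and then $(u,t)\notin N[(u',j)]$. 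With this case distinction added, your proof is complete, and in fact slightly more general than the paper's, which fixes one particular $\lceil m/3\rceil$-pattern in $P_m$; your classification of which pairs survive $D\times V(P_m)$ and the verification of the first bound are sound (twin-freeness of $P_m$ for $m\ge 3$ handling same-column pairs, as you note).
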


\begin{proof}
Let $D$ be an ID code of $G$. Certainly $C = \{(u,v) \mid u \in D, v \in P_m\}$ is an ID code of $G \Box P_m$. Next, let $A$ be a minimum 
dominating set of $G$. Let $\{0, 1, \dots, m-1\}$ represent the vertices of $P_m$. Let $X = \{(u,v) \mid u \in A, v \in P_m\}$ and 
$Y = \{(u,v) \mid u \in V(G) - A, v \equiv 1 \pmod{3}\}$. If $m \not\equiv 1 \pmod{3}$, then $X \cup Y$ is an ID-code of $G \Box P_m$. 
If $m \equiv 1 \pmod{3}$, then let $Y' = \{(u,v) \mid u \in V(G) - A, v \equiv 1 \pmod{3} \text{ or } v = m-1\}.$ The set
$X \cup Y'$ is an ID code of $G \Box P_m$. In either case, we have constructed an ID code of cardinality 
$m\gamma(G) + \left\lceil \frac{m}{3}\right\rceil(n - \gamma(G))$.
\end{proof}

\section*{Acknowledgements}

Research of the first author was supported by a grant from the Simons Foundation (\#209654 to Douglas Rall).

\end{document}